\newtheorem{proposition}{Proposition}[section]
\newtheorem{theorem}[proposition]{Theorem}
\newtheorem{lemma}[proposition]{Lemma}
\theoremstyle{definition}
\theoremstyle{remark}
\newtheorem{remark}[proposition]{Remark}
\numberwithin{equation}{section}
\newcommand{\dd}{{\rm\,d}}
\newcommand{\dx}{{\rm\,d}x}
\newcommand{\ds}{{\rm\,d}s}
\newcommand{\R}{\mathbb{R}}
\newcommand{\Y}{\mathscr{Y}}
\newcommand{\N}{\mathbb N}
\newcommand{\tB}{\tilde{B}}
\newcommand{\PM}{\mathcal{P}\!\mathcal{M}}
\newcommand{\sgn}{{\rm sgn\,}}
\newcommand{\un}{\mathbf{1}\!\!{\rm I}}
\newcommand{\III}[1]{{\vert\kern-0.25ex\vert\kern-0.25ex\vert #1 
    \vert\kern-0.25ex\vert\kern-0.25ex\vert}}
\newcommand{\BB}[1]{{ [\![ #1 ]\!]}} 
\DeclareMathOperator{\esssup}{ess\,sup}
\begin{document}
     \baselineskip=13pt
\title{Sharp well-posedness and blowup results \\ for parabolic systems of the Keller--Segel type} 
\author{Piotr Biler$^1$, Alexandre Boritchev$^2$ and Lorenzo Brandolese$^2$\\ 
\small{ $^1$ Instytut Matematyczny, Uniwersytet Wroc{\l}awski,}\\ 
\small{ pl. Grunwaldzki 2, 50--384 Wroc{\l}aw, POLAND}\\ 
\small{ $^2$Université Claude Bernard Lyon 1,}\\
\small{CNRS UMR 5208, Institut Camille Jordan,}\\
\small{F-69622 Villeurbanne, FRANCE }\\
}

\date{\today}
\maketitle

\begin{abstract}
We study two toy models obtained after a slight modification of the nonlinearity of the usual doubly parabolic Keller--Segel system. For these toy models, both consisting of a system of two parabolic equations, we establish that for data which are, in a suitable sense, smaller than the diffusion parameter $\tau$ in the equation for the chemoattractant, we obtain global solutions, and for some data larger than $\tau$, a finite time blowup. In this way, we check  that our size condition for the global existence is sharp for large $\tau$, up to a logarithmic factor. 
\end{abstract}

\section{Introduction and main results  }

This paper is concerned with parabolic systems {\rm (TM)} and {\rm (TM')} below, depending on a diffusion parameter $\tau>0$: 
\begin{equation}
\left\{
\begin{aligned}
&u_t =\Delta u-u \Delta \varphi,\\
&\tau \varphi_t=\Delta \varphi+u,\\
&u(0)=u_0,\ \  \varphi(0)=\varphi_0,
\end{aligned}
\right.
\qquad x\in\R^d,\ t>0,
\tag{TM}
\end{equation}
\medskip 
and 
\begin{equation}
\left\{
\begin{aligned}
&u_t =\Delta u+(\Delta\varphi)^2,\\
&\tau \varphi_t=\Delta \varphi+u,\\
&u(0)=u_0,\ \  \varphi(0)=\varphi_0,
\end{aligned}
\right.
\qquad x\in\R^d,\ t>0,
\tag{TM'}
\end{equation}
The above systems degenerate into the quadratic nonlinear heat equation {\rm (NLH)} when $\tau=0$ and 
a compatibility condition, $\Delta \varphi_0=-u_0$, is put on $\varphi_0$:

\begin{equation}\tag{NLH}
\left\{\begin{aligned}
&u_t =\Delta u+u^2,\quad\\
&u(0)=u_0,\quad
\end{aligned}
\quad \qquad \qquad x\in\R^d,\ t>0.
\right.
\end{equation}

These model systems are introduced in order to show the influence of the parameter $\tau$ in the second equation (a linear nonhomogeneous heat equation) on the size of admissible initial data leading to global-in-time solutions, and to finite time blowup, respectively. 
Our main motivation is to analyze those issues for the Keller--Segel system {\rm (PP)}, continuing the analysis started in \cite{BBB1}. 
Here {\rm (PP)} denotes the doubly parabolic Keller--Segel system describing chemotaxis, given by 
\begin{equation}
\left\{
\begin{aligned}
&u_t =\Delta u-\nabla\cdot(u\nabla \varphi),\\
&\tau \varphi_t=\Delta \varphi+u,\\
&u(0)=u_0,\ \  \varphi(0)=\varphi_0,
\end{aligned}
\right.
\qquad x\in\R^d,\ t>0,
\tag{PP}
\end{equation}
where $\tau>0$, 
 and {\rm (PE)} is the parabolic-elliptic Keller--Segel system with $\tau=0$
\begin{equation}
\left\{
\begin{aligned}
&u_t =\Delta u-\nabla\cdot(u\nabla \varphi),\\
&\Delta \varphi+u=0,\\
&u(0)=u_0,   
\end{aligned}
\right.
\qquad x\in\R^d,\ t>0.
\tag{PE}
\end{equation}
The existence of solutions for {\rm (PP)} has been studied in the recent work \cite{BBB1}. 
The issue of blowup is largely open for {\rm (PP)}, except for  \cite{Wi1,Wi2} where  the radially symmetric problem is considered in a ball and in \cite{Wi3} in the whole space. Note that  some concentration phenomena for {\rm (PP)} have been shown in \cite{CCE}. There, a supplementary information on $L^1$ solutions is derived from entropy functionals and other specific properties of those drift-diffusion systems.

Note that {\rm (TM)} and {\rm (TM')}  have the same structure of steady states as {\rm (NLH)}, so they are related in a way similar to that as {\rm(PP)} relates to {\rm (PE)}. 
However, {\rm (NLH)} has a number of specific properties (such as variational structure, energy functional) that are not immediately extended to systems like {\rm (TM)} and {\rm (TM')}. 

All the considered systems feature cross-diffusion terms which make their analysis delicate. This justifies, in a sense, the application of Besov spaces in their analysis, cf. \cite{I,Lem} for well- and ill-posedness issues.

The analysis of the toy models {\rm (TM)} and {\rm (TM')} shed some light on the optimal spaces where it is reasonable to address the existence problem for (PP). Indeed, we prove in Sec. 2.2 existence for {\rm (TM)} in the framework of Besov-type spaces, very close to optimal spaces with respect to the admissible initial data leading to local-in-time solutions. Next, in Sec 3.1, a couple of such existence and regularity results is shown for {\rm (TM)}, and for {\rm (TM')} in Sec. 3.2, in the framework of pseudomeasures,  
similarly to the presentation in \cite[Sec. 2]{BBB1}. 
Section 4 is devoted to proofs of blowup, for {\rm (TM)} in Sec. 4.1 and for {\rm (TM')} in Sec. 4.2. 
The proofs are based on the idea going back to \cite{M-S} (and used for {\rm (PE)} in \cite{BB1}) where the Fourier transform of solutions is analyzed: we prove that it tends to infinity in an appropriate sense in finite time, so that the solution itself necessarily becomes non-smooth. 
Thus, the technique used in the proof of Theorem \ref{theoremCC} differs much from the usual approaches to nonlinear parabolic equations (see e.g. \cite[Ch. 17]{QS}). 
A more traditional approach involving moments of solutions is presented in Sec. 5 for the model {\rm (TM')} considered in bounded domains.


Besides formal similarities of {\rm (TM)}, {\rm (TM')} with {\rm (NLH)} and  {\rm (PE)}
when $\tau=0$, or with {\rm (PP)} when  $\tau>0$, there are rigorous results on singular perturbation limits $\tau\searrow 0$ for small solutions of Keller--Segel systems for which blowup does not occur. 
Indeed, 
solutions with $\varphi_0=(-\Delta)^{-1}u_0$ are recovered for $\tau=0$, in the sense of suitable convergences in \cite{Rac,BB2,Lem,K-O} where various functional settings are proposed. 
Similar results hold also for the toy models but our main interest here is rather the behaviour for large $\tau$ than for $\tau$ close to $0$. 

Note however, that a natural conjecture that blowup phenomena should be {\em continuous with respect to  the  parameter $\tau\to 0$} has not been, unfortunately, up to now  rigorously proved, neither for {\rm (TM)}$\to${\rm (NLH)} nor {\rm (PP)}$\to${\rm (PE)}. 
 \bigskip

In the global-in-time existence results below the initial data $u_0$ does not have to be positive.
\\
We make the assumption $\varphi_0=0$ which simplifies the presentation but it is not essential. 

Our analysis illustrate that existence and well-posedness results for both toy models have nearly optimal character if one considers (both the regularity  and) the size of the initial data. This in turn sheds new light on these issues for the classical parabolic-parabolic Keller--Segel system, as discussed in \cite{BBB1}.


\bigskip

\subsection*{Notation}


In this paper we adopt the following notation and conventions. The expression $A\lesssim B$, where $A$ and $B$ may depend on several parameters, means that there exists a constant $c>0$, depending only on the space dimension, such that $A\le c\,B$. When both $A\lesssim B$ and $B\lesssim A$
we will write $A\approx B$. 

For a function $f\in L^1(\R^d)$, 
the definition of the Fourier transform that we use is 
$\widehat f(\xi)=\int f(x)\exp(-i\xi\cdot x)\dd x$. This definition is extended
to $\mathscr{S}'(\R^n)$, the space of tempered distributions, in the usual way.
The space of general distributions is denoted $\mathscr{D}'(\R^d)$.

%
%
%

In this paper we will deal with mild solutions. 
These are solutions of the integral formulation of (PP). 
The exact meaning of the integral must be understood in the specific functional setting. 


\section{Besov spaces}

In this section, we study the local and the global solvability of (TM) in
Besov-type spaces.
The singularity of $\Delta\varphi$ is somehow too strong to directly perform
the bilinear estimates arising from the the nonlinear term $u\Delta\varphi$ in the usual way.
For this reason, we will study (TM) as a perturbation of the original model {\rm (PP)}. 
Accordingly, the nonlinear term in will be decomposed as 
\begin{equation}
\label{eq:pertu}
u\Delta\varphi=\nabla \cdot(u\nabla \varphi)-\nabla u \cdot \nabla \varphi.
\end{equation}
The first term in the right-hand is nothing but the nonlinearity of (PP).
In our previous paper \cite{BBB1}, the solutions to (PP) were constructed in a ball of the space
\begin{equation} \label{E_p}
{\mathcal E}_p:=\left\{ u \in L^{\infty}(0,\infty;L^p(\R^d)),\ \III{u}_p:=\esssup t^{1-d/(2p)} \Vert u(t) \Vert_p <\infty\ \right\}.
\end{equation}
But the presence of the second term requires to have additional information on
$\|\nabla u(t)\|_p$. For this reason, 
we will need to work in the subset of the space ${\mathcal E}_p$ given by 
\begin{equation} \label{F_p}
{\mathcal F}_p:=\left\{ u \in  L^{\infty}(0,\infty;W^{1,p}(\R^d)):\ 
\BB{u}_p:= \III{u}_{p}+\III{u}_{1,p}<\infty \right\},
\end{equation}
where we define
$$
\III{u}_{1,p}:=\esssup_{t>0} t^{3/2-d/(2p)} \Vert \nabla u(t) \Vert_p.
$$

The purpose of this section is to establish the following:

\begin{theorem} \label{BesT1'}
Let $d\ge3$.
For $2d/3<p<d$ and $2/d-1/p<1/q<1/d$, there exist $C_{q,d},C'_{q,d}>0$ such that if we denote
\begin{equation} \label{Besgamma}
\gamma:=1/2-d/2(1/p-1/q)
\end{equation}
(when $p \rightarrow d$, we have $q \rightarrow d$, which forces $\gamma \rightarrow 1/2$), and
\begin{equation} \label{BesIn'}
\Vert u_0 \Vert_{\dot{B}_{p,\infty}^{-(2-d/p)}} < C_{q,d} \tau^{\gamma},
\end{equation}
then there exists a global mild solution $u \in {\mathcal F}_p$ to {\rm (TM)}, such that
\begin{equation}
\label{balluni}
\BB{u}_p < C'_{q,d} \Vert u_0 \Vert_{\dot{B}_{p,\infty}^{-(2-d/p)}}.
\end{equation}
Such a solution is uniquely defined by condition~\eqref{balluni}.
Moreover, $u(t)-{\rm e}^{t\Delta}u_0\in BC(0,\infty;L^{d/2})$.

\end{theorem}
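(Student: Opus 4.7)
The plan is to run a Banach fixed-point argument in a small ball of $\mathcal{F}_p$. Using the decomposition~\eqref{eq:pertu} and solving the second equation of (TM) (with $\varphi_0=0$) by the semigroup $e^{t\Delta/\tau}$, the Duhamel form of the first equation reads $u(t)=e^{t\Delta}u_0+B_1(u,u)(t)+B_2(u,u)(t)$, where
\begin{equation*}
B_1(u,v)(t):=-\int_0^t e^{(t-s)\Delta}\nabla\cdot\bigl(u\,\nabla\varphi_v\bigr)(s)\ds,\qquad B_2(u,v)(t):=\int_0^t e^{(t-s)\Delta}\bigl(\nabla u\cdot\nabla\varphi_v\bigr)(s)\ds,
\end{equation*}
and $\nabla\varphi_v(s)=\tau^{-1}\int_0^s\nabla e^{(s-\sigma)\Delta/\tau}v(\sigma)\dsigma$. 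The operator $B_1$ is of Keller--Segel type, as in \cite{BBB1}; the novel object is $B_2$, whose estimate requires a priori control of $\nabla u$, which is precisely why one must work in $\mathcal{F}_p$ rather than in $\mathcal{E}_p$.

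The heat-kernel characterization of $\dot B_{p,\infty}^{-(2-d/p)}$ gives $\III{e^{t\Delta}u_0}_p\approx\|u_0\|_{\dot B_{p,\infty}^{-(2-d/p)}}$; combined with the factorization $\nabla e^{t\Delta}=\nabla e^{(t/2)\Delta}\circ e^{(t/2)\Delta}$ and $\|\nabla e^{a\Delta}\|_{p\to p}\lesssim a^{-1/2}$, this yields $\BB{e^{t\Delta}u_0}_p\lesssim\|u_0\|_{\dot B_{p,\infty}^{-(2-d/p)}}$. Next, inserting the bound $\|v(\sigma)\|_p\le\III{v}_p\sigma^{-(1-d/(2p))}$ into the formula for $\nabla\varphi_v$ and applying $\|\nabla e^{a\Delta}g\|_q\lesssim a^{-1/2-d(1/p-1/q)/2}\|g\|_p$ with $a=(s-\sigma)/\tau$, rescaling pulls out the prefactor $\tau^{-1+1/2+d(1/p-1/q)/2}=\tau^{-\gamma}$ and a Beta integral (convergent exactly when $1/q<1/d$) yields the fundamental pointwise bound
\begin{equation*}
\|\nabla\varphi_v(s)\|_q\,\lesssim\,\tau^{-\gamma}\,\III{v}_p\,s^{-(1/2-d/(2q))}.
\end{equation*}

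Combining this with Hölder's inequality (with $1/r=1/p+1/q$) and the $L^r\to L^p$ and $L^r\to\dot W^{1,p}$ mapping properties of $e^{a\Delta}$ and of its derivatives, I would estimate $B_1$ and $B_2$ in each of the two components of $\BB{\cdot}_p$. Each of the four resulting time integrals reduces to a Beta integral $\int_0^t(t-s)^{-a}s^{-b}\ds$, whose convergence together with the correct power-in-$t$ decay corresponds one-to-one to the standing hypotheses $2d/3<p<d$ and $2/d-1/p<1/q<1/d$. The outcome is
\begin{equation*}
\BB{B_1(u,v)+B_2(u,v)}_p\,\lesssim\,\tau^{-\gamma}\,\BB{u}_p\BB{v}_p,
\end{equation*}
after which a standard contraction in the ball $\{u\in\mathcal{F}_p:\BB{u}_p\le C'_{q,d}\|u_0\|_{\dot B_{p,\infty}^{-(2-d/p)}}\}$ provides the unique fixed point as soon as~\eqref{BesIn'} holds. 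The last assertion $u-e^{t\Delta}u_0\in BC(0,\infty;L^{d/2})$ is obtained by bounding $B_1(u,u)+B_2(u,u)$ directly in $L^{d/2}$ by the same Hölder-plus-heat-semigroup scheme; the temporal exponent $1-d/(2\cdot d/2)=0$ is exactly what yields a uniform-in-$t$ bound, and continuity in $t$ follows by a standard approximation argument.

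The hard part will be the $\III{\cdot}_{1,p}$-estimate of $B_1$: the naive bound $\|\nabla e^{a\Delta}\nabla\cdot F\|_p\lesssim a^{-1-d/(2q)}\|F\|_r$ is not integrable at $s=t$, and one must split the integral at $s=t/2$, exploiting for the piece $s\in(0,t/2)$ the factorization $\nabla e^{(t-s)\Delta}=\nabla e^{t\Delta/2}\circ e^{(t/2-s)\Delta}$ to distribute the two derivatives between the kernel and the semigroup. The corresponding estimate of $B_2$ produces the integrand $s^{-2+d/(2p)+d/(2q)}$, whose integrability at the origin forces the lower bound $1/p+1/q>2/d$; this is the precise origin of the condition $1/q>2/d-1/p$ in the theorem, and the main reason why one must work in $\mathcal{F}_p$ rather than in $\mathcal{E}_p$.
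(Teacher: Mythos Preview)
Your overall strategy matches the paper's: same decomposition~\eqref{eq:pertu}, same Duhamel formulation with $B_1=B$ and $B_2=\tilde B$ (in the paper's notation), fixed point in~$\mathcal{F}_p$, and the same linear bound $\|\nabla\varphi_v(s)\|_q\lesssim\tau^{-\gamma}\III{v}_p\, s^{-(1/2-d/(2q))}$ (this is exactly Lemma~\ref{BesL1}). The final bilinear estimate and the $L^{d/2}$ argument for the fluctuation are also as in the paper.

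The gap is in your handling of $\III{B_1}_{1,p}$. Your splitting at $s=t/2$ with the factorization $\nabla e^{(t-s)\Delta}=\nabla e^{t\Delta/2}\circ e^{(t/2-s)\Delta}$ does cover the piece $s\in(0,t/2)$, but you say nothing about $s\in(t/2,t)$, where the singularity $(t-s)^{-1-d/(2q)}$ is just as non-integrable as before and the factorization buys nothing. The paper's fix is different and is the main technical point of the section: commute the outer gradient through the semigroup and apply the Leibniz rule, so that schematically
\[
\nabla B_1(u,z)=B_1(\nabla u,z)+B_1(u,\nabla z),\qquad \nabla B_2(u,z)=-B_1(\nabla u,z).
\]
Each right-hand term then carries only \emph{one} derivative on the heat kernel, yielding an integrable $(t-s)^{-1/2-d/(2q)}$ at $s=t$. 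The price is a second linear estimate,
\[
\|L(\nabla v)(s)\|_q\;\lesssim\; \tau^{-\gamma}\,\III{v}_{1,p}\,s^{-1+d/(2q)}
\]
(Lemma~\ref{BesL1'}), which uses the $\III{\cdot}_{1,p}$ information on the \emph{second} argument as well, not only on the first as in your $B_2$ estimate. With this in hand no splitting is needed at all: both Beta integrals converge on $(0,t)$ under the hypotheses $2/d-1/p<1/q<1/d$, and your claimed bilinear bound follows directly.
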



We will reduce the proof of this theorem
to a few lemmas, established in the following subsections. 
For larger initial data the size condition~\eqref{BesIn'} may not apply. In this case,
local-in-time solutions can be proved to exist, under stronger regularity conditions 
on the initial data.

\begin{remark}
As mentioned in the introduction, our main interest is for $\tau\gg1$.
Notice that the exponent in~\eqref{Besgamma} is such that $\gamma<1/2$. 
Thus, according to Theorem~\ref{BesT1'}, the size of the admissible data for the global existence would be worse than $O(\sqrt\tau)$, as $\tau\to+\infty$. This is not completely satisfactory, in view of next blowup result (Theorem~\ref{theoremCC} below): the latter will make evidence
of a class of initial data of size $O(\tau)$, whose solutions
blow up in finite time. Thus, there is a substantial gap between our
two results on global existence and on finite time blowup.

Such a gap is due to a technical limitation, related to the use of Besov spaces. Assuming that the initial data belong to slightly less rough spaces, we can almost completeley close this gap, up to a logarithmic factor in $\tau$.
This will motivate the analysis of (TM) in a different functional
setting. See Section~\ref{sec:PM}.
\end{remark}

%
%
%

\subsection{Review of known estimates in $\mathcal{E}_p$}

In this short subsection we briefly recall without proof some results obtained in 
\cite{BBB1} in order to study the original system {\rm (PP)}.

We recall the classical $L^p$-$L^q$ inequalities for the heat semigroup (see e.g. \cite[Ch. 15, (1.15)]{T3} or   \cite{G-M}), 
valid for $1 \le p \le q \le \infty:$   
\begin{equation} 
\label{heat}
\begin{split}
\Vert {\rm e}^{t\Delta} f \Vert_q &\le C(d,p,q) t^{-d(1/p-1/q)/2} \Vert f \Vert_p,\\ \Vert \nabla {\rm e}^{t\Delta} f \Vert_q &\le C(d,p,q) t^{-1/2-d(1/p-1/q)/2} \Vert f \Vert_p.
\end{split}
\end{equation}

Following~\cite{BBB1}, let us introduce the (respectively, linear and bilinear) operators $L$ and $B$ given by:
\begin{align}
\label{L}
&Lz(t):=\tau^{-1} \int_{0}^{t}{\nabla {\rm e}^{\tau^{-1} (t-s) \Delta} z(s) \ds},\ 
\\ \label{B}
&B(u,z)(t):=-\int_{0}^{t}{\nabla {\rm e}^{(t-s) \Delta} \cdot(u(s) Lz(s)) \ds}.
\end{align}

Below, all constants are implicitly assumed to depend on $d$. They are also implicitly assumed to depend on $p$ and $q$ in the lemmas. 

\begin{lemma} \label{BesL1}
If $0 \le 1/q \le 1/p < 1/q+1/d$ and $0<1/p$, then for $t>0$ 
$$
\Vert Lz(t) \Vert_q \le C \tau^{-1/2+d/2(1/p-1/q)} t^{-1/2+d/2q} \III{z}_p.
$$
\end{lemma}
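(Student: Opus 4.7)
The plan is to reduce the estimate to a single direct calculation: apply the gradient heat-kernel bound pointwise in $s$, use the defining weight of $\III{\cdot}_p$ to control $\|z(s)\|_p$, and then recognize the remaining time-integral as a Beta-type integral whose convergence is guaranteed by the stated hypotheses on $p$ and $q$.

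More precisely, I first bring the $L^q$-norm inside the integral in the definition~\eqref{L} of $Lz(t)$ and apply the gradient heat-semigroup estimate from~\eqref{heat} with the rescaled time $\tau^{-1}(t-s)$, which gives
\begin{equation*}
\Vert \nabla {\rm e}^{\tau^{-1}(t-s)\Delta} z(s) \Vert_q
\lesssim \bigl(\tau^{-1}(t-s)\bigr)^{-1/2 - d/2(1/p-1/q)} \Vert z(s) \Vert_p .
\end{equation*}
This uses the hypothesis $1/q \le 1/p$, which ensures $p \le q$ so that the semigroup bound applies. Next, from the definition of $\III{z}_p$ in~\eqref{E_p}, I have $\|z(s)\|_p \le s^{-1+d/(2p)} \III{z}_p$.

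Substituting these two bounds into the definition of $Lz(t)$ and pulling the powers of $\tau$ out of the integral produces a prefactor $\tau^{-1} \cdot \tau^{1/2 + d/2(1/p-1/q)} = \tau^{-1/2 + d/2(1/p-1/q)}$, which is exactly the $\tau$-power claimed in the lemma. The remaining integral is
\begin{equation*}
\int_0^t (t-s)^{-1/2 - d/2(1/p-1/q)} \, s^{-1 + d/(2p)} \dd s.
\end{equation*}
The change of variable $s = t\sigma$ pulls out a factor $t^{1/2 + d/(2p) - d/2(1/p-1/q) - 1} = t^{-1/2 + d/(2q)}$, matching the time weight in the lemma, and leaves a Beta-type integral $\int_0^1 (1-\sigma)^{a-1} \sigma^{b-1} \dd\sigma$.

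The only thing to check is that this Beta integral converges, and this is precisely where the two remaining hypotheses enter: integrability at $\sigma = 1$ demands $-1/2 - d/2(1/p-1/q) > -1$, equivalent to $1/p < 1/q + 1/d$; integrability at $\sigma = 0$ demands $-1 + d/(2p) > -1$, equivalent to $0 < 1/p$. I expect no serious obstacle here, since the hypotheses are tailored to match these two conditions exactly; the only mild care needed is tracking the $\tau$-rescaling correctly when applying~\eqref{heat} to the kernel $e^{\tau^{-1}(t-s)\Delta}$, so that the $\tau$-exponent comes out as stated.
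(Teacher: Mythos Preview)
Your proposal is correct and follows exactly the approach the paper would use. In fact, the paper does not reprove Lemma~\ref{BesL1} here (it is recalled from~\cite{BBB1}), but your argument is identical in structure to the paper's proof of the companion Lemma~\ref{BesL1'}: apply the gradient heat estimate~\eqref{heat} with rescaled time $\tau^{-1}(t-s)$, insert the weight from the definition of $\III{\cdot}_p$, extract the $\tau$-power, and reduce to a Beta integral whose convergence is equivalent to the stated hypotheses on $p,q$.
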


\begin{lemma} \label{BesL2}
If $1/d < 1/p+1/q \le 1$, $0 \le 1/q < 1/d$ and the conditions of Lemma~\ref{BesL1} hold, then we have
$$
\III{B(u,z)}_p \le C \tau^{-1/2+d/2(1/p-1/q)} \III{u}_p \III{z}_p.
$$
\end{lemma}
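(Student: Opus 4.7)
The target is the bilinear estimate for the operator
$B(u,z)(t)=-\int_0^t \nabla{\rm e}^{(t-s)\Delta}\!\cdot(u(s)\,Lz(s))\,\ds$
in the scale-invariant norm $\III{\,\cdot\,}_p$, using Lemma~\ref{BesL1} as a black box to control $Lz$. The plan is a textbook Picard--type estimate based on the heat kernel bounds~\eqref{heat}, Hölder, and a Beta-function computation; the substance of the lemma is the bookkeeping of the scaling in $t$ and of the power of $\tau$.

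First, I choose the Hölder exponent $r$ by $1/r=1/p+1/q$; the hypothesis $1/d<1/p+1/q\le 1$ ensures $r\in[1,\infty)$. I then estimate pointwise in $t$:
\begin{equation*}
\|B(u,z)(t)\|_p \le \int_0^t \bigl\|\nabla {\rm e}^{(t-s)\Delta}(u(s)Lz(s))\bigr\|_p\,\ds
\lesssim \int_0^t (t-s)^{-1/2-d/(2q)}\|u(s)\,Lz(s)\|_r\,\ds,
\end{equation*}
where the kernel bound uses the second line of~\eqref{heat} with the gain from $L^r\to L^p$ of weight $d/(2q)$. By Hölder, $\|u(s)Lz(s)\|_r\le \|u(s)\|_p\,\|Lz(s)\|_q$. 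From the definition of $\III{\,\cdot\,}_p$, $\|u(s)\|_p\le \III{u}_p\,s^{-1+d/(2p)}$, while Lemma~\ref{BesL1} gives $\|Lz(s)\|_q \lesssim \tau^{-1/2+d/2(1/p-1/q)}\,s^{-1/2+d/(2q)}\,\III{z}_p$. Multiplying these produces the integrand
\begin{equation*}
(t-s)^{-1/2-d/(2q)}\,s^{-3/2+d/(2p)+d/(2q)},
\end{equation*}
up to the factor $\tau^{-1/2+d/2(1/p-1/q)}\III{u}_p\III{z}_p$, which factors out of the integral.

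Second, I apply the standard Beta-function identity $\int_0^t(t-s)^{a-1}s^{b-1}\ds=t^{a+b-1}B(a,b)$ with $a=1/2-d/(2q)$ and $b=-1/2+d/(2p)+d/(2q)$. Positivity of $a$ is exactly the hypothesis $1/q<1/d$, and positivity of $b$ is exactly $1/p+1/q>1/d$; both are among the lemma's assumptions. A short arithmetic check gives $a+b-1=-1+d/(2p)$, so
\begin{equation*}
\|B(u,z)(t)\|_p \lesssim \tau^{-1/2+d/2(1/p-1/q)}\,\III{u}_p\,\III{z}_p\,t^{-1+d/(2p)}.
\end{equation*}
Multiplying by $t^{1-d/(2p)}$ and taking the essential supremum in $t>0$ yields exactly the claimed bound on $\III{B(u,z)}_p$.

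The whole argument is essentially a scaling/bookkeeping exercise; the only technical point to watch is the simultaneous positivity of $a$ and $b$, which is precisely what forces the range of $p,q$ stated in the lemma (and which is why the restrictions from Lemma~\ref{BesL1} are inherited). I expect no further obstacle: the powers of $\tau$ come in only through Lemma~\ref{BesL1}, and the homogeneity in $t$ is fixed by the scale invariance of the $\III{\,\cdot\,}_p$ norm, so the final exponents must (and do) match without any additional cancellation.
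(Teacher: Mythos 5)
Your proof is correct and follows essentially the same route the authors use: the paper recalls this lemma from \cite{BBB1} without reproving it, but the analogous estimates it does prove (Lemmas~\ref{BesL2'} and~\ref{BesL2''}) proceed exactly as you do, via the heat-kernel bound~\eqref{heat}, H\"older, Lemma~\ref{BesL1}, and the time integral evaluated by the substitution $\tilde s=t/s$ (equivalently, your Beta-function identity). Your bookkeeping of the exponents in $t$ and $\tau$, and the identification of the hypotheses $1/q<1/d$ and $1/p+1/q>1/d$ with the convergence of the two endpoints of the integral, all check out.
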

In order to apply both lemmas, we should check the compatibility of the conditions of the exponents~$p$ and~$q$. The two lemmas, combined, require that:
$$
|1/p-1/d|<1/q \le \min\{1/p,1-1/p\};\quad 1/q<1/d\quad\text{and}\quad 0<1/p,
\quad\text{with}\quad d>1.
$$
This requires choosing $p$ so that $1/d<2/p<\min\{1+1/d,4/d\}$.
In other words, in order to find a suitable $q$ to use the lemmas above, a necessary and sufficient assumption is:
$$
d \ge 2;\quad \max\{d/2,\ 2d/(d+1)\}<p<2d.
$$ 
%
The last lemma in~\cite{BBB1} that we will need is the following:
\begin{lemma} \label{BesL4}
If $d \ge 2$ and $2d/3<p\le d$ and $u \in {\mathcal E}_p$, we have:
$$
\Vert B(u,u)(t) \Vert_{d/2} \le C \tau^{-3/2+d/p} \III{u}_p^2.
$$
\end{lemma}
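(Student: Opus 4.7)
The plan is to estimate $B(u,u)(t)$ in $L^{d/2}$ by combining the heat--semigroup bound on the outer convolution, Hölder's inequality to split the product $u\cdot Lu$, and Lemma \ref{BesL1} to control $Lu$. Since the target space $L^{d/2}$ already sits at the bottom of the available range, there is no room to gain spatial regularity from the heat kernel beyond the derivative; I would therefore use the endpoint case of \eqref{heat},
$$\|\nabla {\rm e}^{(t-s)\Delta} g\|_{d/2}\lesssim (t-s)^{-1/2}\|g\|_{d/2},$$
which yields
$$\|B(u,u)(t)\|_{d/2}\lesssim \int_0^t (t-s)^{-1/2}\,\|u(s)\,Lu(s)\|_{d/2}\,ds.$$

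Next I would apply Hölder to $u\cdot Lu$ with exponents $p$ and $q$ satisfying $1/p+1/q=2/d$, i.e.\ $1/q:=2/d-1/p$. Under the hypothesis $2d/3<p\le d$ one checks directly that $0\le 1/q\le 1/p<1/q+1/d$, which are exactly the assumptions of Lemma \ref{BesL1} (the strict inequality $1/p<1/q+1/d$ is precisely what forces $p>2d/3$ to be strict). Applying that lemma and using $1/p-1/q=2/p-2/d$ together with $d/(2q)=1-d/(2p)$ gives
$$\|Lu(s)\|_q\lesssim \tau^{-1/2+d/2(1/p-1/q)}\,s^{-1/2+d/(2q)}\,\III{u}_p=\tau^{-3/2+d/p}\,s^{1/2-d/(2p)}\,\III{u}_p.$$
Combining with the defining bound $\|u(s)\|_p\le s^{-1+d/(2p)}\III{u}_p$ and Hölder, the two time exponents add to $-1/2$, so
$$\|u(s)\,Lu(s)\|_{d/2}\lesssim \tau^{-3/2+d/p}\,s^{-1/2}\,\III{u}_p^2.$$

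Inserting this in the previous integral inequality yields
$$\|B(u,u)(t)\|_{d/2}\lesssim \tau^{-3/2+d/p}\,\III{u}_p^2\int_0^t(t-s)^{-1/2}s^{-1/2}\,ds=\pi\,\tau^{-3/2+d/p}\,\III{u}_p^2,$$
the last integral being a scale-invariant Beta function equal to $\pi$, independent of $t$. The \emph{main obstacle}, modest here, is the exponent bookkeeping: the choice $1/p+1/q=2/d$ is dictated simultaneously by two requirements, namely that Hölder lands in the endpoint space $L^{d/2}$ on which the non-regularising heat estimate applies, and that the resulting $\tau$-power be exactly $-3/2+d/p$. The admissible range $2d/3<p\le d$ is then exactly what is needed for the chosen $q$ to satisfy the hypotheses of Lemma \ref{BesL1} and for the two time exponents in the $s$-integral to remain strictly above $-1$.
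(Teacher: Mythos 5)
Your proof is correct. The paper itself states Lemma~\ref{BesL4} without proof (it is recalled from \cite{BBB1}), but your argument — endpoint gradient heat estimate into $L^{d/2}$, H\"older with $1/q=2/d-1/p$, Lemma~\ref{BesL1} for $Lu$, and the scale-invariant Beta integral — is exactly the template the paper uses for the analogous Lemma~\ref{BesCont}, and your exponent bookkeeping (including why $p>2d/3$ and $p\le d$ are exactly the conditions needed for Lemma~\ref{BesL1} to apply to this $q$) checks out.
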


\subsection{New estimates in $\mathcal{F}_p$}

Here we complete the results of the previous subsection with some additional
estimates that we need to solve (TM).
We will assume that $d \ge 3$. Otherwise,  we cannot find admissible parameters for the technical lemmas which we are using.
In the same way as we did when studying the classical Keller--Segel model in \cite[Sec. 3]{BBB1}, all constants are implicitly assumed to depend on $d$, 
as well as on $p$ and $q$ in the lemmas.

The second term in the right-hand side of~\eqref{eq:pertu} motivates the introduction
of a second bilinear operator. 
We define
\begin{equation} \label{Btilde}
\tB(u,z)(t):=\int_{0}^{t}{ {\rm e}^{(t-s) \Delta} (\nabla u(s)\cdot Lz(s)) \ds}.
\end{equation}
We will proceed similarly to the proofs in \cite{BBB1} of the results in the previous subsection, where we have already estimated $\III{B(u,z)}_p$. Here we need to study the quantities 
$\III{B(u,z)(t)}_{1,p}+\III{\tB(u,z)(t)}_{1,p}$ and $\III{\tB(u,z)(t)}_p$.
This will be achieved, respectively in Lemma~\ref{BesL2'} and Lemma~\ref{BesL2''} below.

Both of these lemmas require a preliminary linear estimates for
$\III{L \nabla v(t)}_{q'}$.

\begin{lemma} \label{BesL1'}
Under the assumptions
$$
0 \le 1/q' \le 1/p < 1/q'+1/d;\quad p<d,
$$
we have $\III{L \nabla v(t)}_{q'} \le C \tau^{-1/2+d/2(1/p-1/q')} t^{-1+d/2q'} \III{v}_{1,p}$.
\end{lemma}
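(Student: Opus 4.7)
The plan is to bound $\|L\nabla v(t)\|_{q'}$ directly from the definition of $L$ together with the second $L^p$--$L^{q'}$ estimate of the heat semigroup in~\eqref{heat}. Writing
$$
L\nabla v(t)=\tau^{-1}\int_{0}^{t}\nabla {\rm e}^{\tau^{-1}(t-s)\Delta}\nabla v(s)\ds,
$$
I would apply \eqref{heat} (with $p$ and $q'$) to the integrand to get a pointwise bound of the form
$$
\|\nabla {\rm e}^{\tau^{-1}(t-s)\Delta}\nabla v(s)\|_{q'}\le C\bigl(\tau^{-1}(t-s)\bigr)^{-1/2-d(1/p-1/q')/2}\|\nabla v(s)\|_p.
$$
Using the definition of $\III{v}_{1,p}$, I then substitute $\|\nabla v(s)\|_p\le s^{-3/2+d/(2p)}\III{v}_{1,p}$, which reduces matters to a single Beta-type integral
$$
\int_{0}^{t}(t-s)^{-\alpha}s^{-\beta}\ds\qquad\text{with}\qquad
\alpha=\tfrac12+\tfrac{d}{2}\bigl(\tfrac1p-\tfrac{1}{q'}\bigr),\quad\beta=\tfrac32-\tfrac{d}{2p}.
$$

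The one genuine point to check is the convergence of this Beta integral at its two endpoints. Integrability at $s=t$ requires $\alpha<1$, which is precisely the assumption $1/p<1/q'+1/d$; integrability at $s=0$ requires $\beta<1$, which is the hypothesis $p<d$. The hypothesis $1/q'\le 1/p$ is what makes $\alpha\ge 1/2$ and ensures \eqref{heat} is applied in the admissible range $1\le p\le q'\le\infty$. Under these conditions the integral equals a finite constant times $t^{1-\alpha-\beta}$, and a direct computation gives $1-\alpha-\beta=-1+d/(2q')$, matching the claimed time exponent.

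Finally, the $\tau$ factors collect cleanly: pulling the factor $(\tau^{-1})^{-\alpha}$ out of the integrand through the change of variables built into~\eqref{heat}, and combining with the prefactor $\tau^{-1}$ coming from the definition of $L$, yields $\tau^{-1}\cdot\tau^{\alpha}=\tau^{-1/2+d(1/p-1/q')/2}$, which is exactly the $\tau$-power stated in the lemma. Multiplying by $\III{v}_{1,p}$ and taking the supremum in $t$ (i.e. multiplying both sides by $t^{1-d/(2q')}$ on the left, the norm $\III{\cdot}_{q'}$) gives the result.

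I do not expect any serious obstacle: the argument is a one-step convolution estimate of the same flavor as the lemmas recalled in Section~2.1, and the assumptions in the statement are exactly calibrated to make the Beta integral convergent and to make \eqref{heat} applicable. The only thing to keep in mind is to identify carefully which endpoint integrability corresponds to which hypothesis, so that the statement of the lemma really does capture the sharp range of admissible $(p,q')$ arising in the later applications.
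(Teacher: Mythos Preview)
Your proposal is correct and follows exactly the same route as the paper: apply the gradient heat-kernel estimate~\eqref{heat} to the integrand in $L\nabla v$, insert the bound $\|\nabla v(s)\|_p\le s^{-3/2+d/(2p)}\III{v}_{1,p}$, and evaluate the resulting Beta integral. The paper's proof is terser and does not spell out which hypothesis controls which endpoint of the Beta integral, but the computation is identical to yours.
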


\begin{proof}
Using \eqref{heat} we get:
\begin{align*}
 \III{L \nabla v(t)}_{q'} &\le C \tau^{-1} \int_{0}^{t}{[\tau^{-1} (t-s)]^{-1/2-d/2(1/p-1/q')} \Vert \nabla v(s) \Vert_p \ds}
\\
&\le C \tau^{-1/2+d/2(1/p-1/q')} \III{v}_{1,p} \int_{0}^{t}{(t-s)^{-1/2-d/2(1/p-1/q')} s^{-3/2+d/2p} \ds}
\\
&\le C \tau^{-1/2+d/2(1/p-1/q')} t^{-1+d/2q'} \III{v}_{1,p}.
\end{align*}
\end{proof}

\begin{lemma} \label{BesL2'}
If $q'$ satisfies the assumptions of Lemma~\ref{BesL1'} (and \textit{a~fortiori} those of Lemma~\ref{BesL1} with $q'$ playing the role of $q$), and moreover
$$
2/d<1/p+1/q' \le 1;\quad 0 \le 1/q' < 1/d,
$$
then
$$
\Vert \nabla B(u,z) (t) \Vert_p + \Vert \nabla \tB(u,z) (t) \Vert_p \le C \tau^{-1/2+d/2(1/p-1/q')} t^{-3/2+d/2p} \BB{u}_{p} \BB{z}_{p}.
$$
\end{lemma}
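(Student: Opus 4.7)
The plan is to reduce $\nabla B(u,z)$ and $\nabla \tB(u,z)$ to time integrals involving at most one derivative on the heat semigroup. A naive estimate putting both derivatives on $e^{(t-s)\Delta}$ would produce a singular factor $(t-s)^{-1-d/(2q')}$, not integrable at $s=t$. The trick for $B$ is to shift one derivative onto the product: since $\nabla$ commutes with the heat semigroup,
$$
\nabla B(u,z)(t) = -\int_0^t \nabla\, e^{(t-s)\Delta}\bigl(\nabla \cdot (u(s)\, Lz(s))\bigr)\,ds,
$$
and we split $\nabla(uLz) = (\nabla u)\,Lz + u\,\nabla Lz$ by Leibniz. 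For $\tB$ no manipulation is needed: the external gradient acts directly on the heat semigroup inside the integrand $\nabla u(s)\cdot Lz(s)$.

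Then I would apply the heat kernel estimate $\|\nabla e^{t\Delta} g\|_p \lesssim t^{-1/2 - d(1/r-1/p)/2}\|g\|_r$, choosing $1/r = 1/p+1/q'$, combined with Hölder $\|fg\|_r \le \|f\|_p\|g\|_{q'}$. For the $(\nabla u)Lz$ term and for $\nabla \tB$, I use $\|\nabla u(s)\|_p \le s^{-3/2+d/(2p)}\,\III{u}_{1,p}$ together with Lemma~\ref{BesL1}, which gives
$\|Lz(s)\|_{q'} \lesssim \tau^{-1/2+d(1/p-1/q')/2}\, s^{-1/2+d/(2q')}\,\III{z}_p$. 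For the $u\,\nabla Lz$ term, I observe that $\nabla$ commutes with $L$, so $\nabla L z = L(\nabla z)$, and apply Lemma~\ref{BesL1'} with $v=z$ to obtain
$\|\nabla Lz(s)\|_{q'} \lesssim \tau^{-1/2+d(1/p-1/q')/2}\, s^{-1+d/(2q')}\,\III{z}_{1,p}$, paired with $\|u(s)\|_p \le s^{-1+d/(2p)}\,\III{u}_p$.

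In each of the three cases the factor of $\tau$ is the same and the time integral reduces to
$$
\int_0^t (t-s)^{-1/2 - d/(2q')}\, s^{-2 + d/(2p) + d/(2q')}\,ds = C\, t^{-3/2 + d/(2p)},
$$
where convergence at $s=t$ is equivalent to $1/q' < 1/d$ and at $s=0$ to $1/p + 1/q' > 2/d$: both are among the lemma's hypotheses. Summing the three contributions and using $\III{u}_p,\, \III{u}_{1,p} \le \BB{u}_p$ (and likewise for $z$) yields the claimed bound. The main obstacle is precisely the Leibniz decomposition of $\nabla(uLz)$: without the regularity $\III{\cdot}_{1,p}$ built into the space ${\mathcal F}_p$, the piece $u\,\nabla Lz$ could not be controlled, which is what motivates working in ${\mathcal F}_p$ rather than merely in ${\mathcal E}_p$.
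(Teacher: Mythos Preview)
Your proof is correct and follows essentially the same route as the paper: both arguments distribute the outer gradient via Leibniz (the paper records this as $\nabla B(u,z)=B(\nabla u,z)+B(u,\nabla z)$ and $\nabla\tB(u,z)=-B(\nabla u,z)$), then estimate each piece by the heat semigroup $L^r$--$L^p$ gradient bound with $1/r=1/p+1/q'$, H\"older, and Lemmas~\ref{BesL1} and~\ref{BesL1'}, arriving at the same Beta-type time integral and the same convergence conditions $1/q'<1/d$ and $1/p+1/q'>2/d$.
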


\begin{proof}
Since we have
$$
\nabla B(u,z)=B(\nabla u, z)+B(u,\nabla z)\quad\text{and}\quad \nabla \tB (u,z)=-B(\nabla u,z),
$$
it suffices to estimate $\Vert B(\nabla u, z) \Vert_p+\Vert B(u,\nabla z)\Vert_p$.
\\
First, we use \eqref{heat} and H{\"o}lder's inequality and then, respectively, Lemma~\ref{BesL1} and Lemma~\ref{BesL1'}), ending by the change of variables $\tilde{s}=t/s$:
\begin{align*}
 \Vert B(\nabla u, z) \Vert_p &\le C \int_{0}^{t}{(t-s)^{-1/2-d/2q'} \Vert \nabla u(s) \Vert_p \Vert Lz(s) \Vert_{q'} \ds}
\\
& \le C \tau^{-1/2+d/2(1/p-1/q')}  \int_{0}^{t}{(t-s)^{-1/2-d/2q'} s^{-3/2+d/2p} s^{-1/2+d/2q'} \ds} \cdot \III{u}_{1,p} \III{z}_{p}
\\
& \le C \tau^{-1/2+d/2(1/p-1/q')} t^{-3/2+d/2p} \III{u}_{1,p} \III{z}_{p}.
\end{align*}
Then, similarly, we get:
\begin{align*}
 \Vert B(u,\nabla z)\Vert_p &\le C \int_{0}^{t}{(t-s)^{-1/2-d/2q'} \Vert u(s) \Vert_p \Vert L \nabla z(s) \Vert_{q'} \ds}
\\
& \le C \tau^{-1/2+d/2(1/p-1/q')} \int_{0}^{t}{(t-s)^{-1/2-d/2q'} s^{-1+d/2p} s^{-1+d/2q'} \ds} \cdot \III{u}_{p} \III{z}_{1,p}
\\
& \le C \tau^{-1/2+d/2(1/p-1/q')} t^{-3/2+d/2p} \III{u}_{p} \III{z}_{1,p}.
\end{align*}
\end{proof}

\begin{lemma} \label{BesL2''}
With the same assumptions as in Lemma~\ref{BesL1} for $q''$ playing the role of $q$, and if moreover
$$
2/d<1/p+1/q'' \le 1;\quad 0 \le 1/q''<2/d,
$$
we have
$$
\Vert \tB(u,z) (t) \Vert_p \le C \tau^{-1/2+d/2(1/p-1/q'')} t^{-1+d/2p} \BB{u}_{p} \BB{z}_{p}.
$$
\end{lemma}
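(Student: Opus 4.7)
The plan is to follow the same pattern used in Lemma \ref{BesL2'}, but now without any gradient falling on the heat kernel inside $\tB$, so that the $L^p$--$L^r$ estimate of \eqref{heat} without gradient factor is the right tool. First I would write
\[
\|\tB(u,z)(t)\|_p \le \int_0^t \bigl\| e^{(t-s)\Delta}(\nabla u(s)\cdot Lz(s)) \bigr\|_p \ds,
\]
then apply \eqref{heat} with $1/r=1/p+1/q''$ (which is $\le 1$ by the hypothesis $1/p+1/q''\le 1$, so $r\ge 1$ is admissible) and H\"older's inequality, yielding
\[
\|\tB(u,z)(t)\|_p \le C \int_0^t (t-s)^{-d/(2q'')} \|\nabla u(s)\|_p \,\|Lz(s)\|_{q''} \ds.
\]

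Next, I would insert the pointwise-in-time bounds that follow from the definition of the $\BB{\cdot}_p$ norm and from Lemma~\ref{BesL1} applied with $q''$ in place of $q$ (the assumptions on $q''$ from Lemma~\ref{BesL1} are satisfied by hypothesis). This gives $\|\nabla u(s)\|_p \le \III{u}_{1,p}\, s^{-3/2+d/(2p)}$ and
\[
\|Lz(s)\|_{q''} \le C\,\tau^{-1/2+d/2(1/p-1/q'')} s^{-1/2+d/(2q'')} \III{z}_p.
\]
Plugging these into the above inequality, collecting the powers of $s$ as $s^{-2+d/(2p)+d/(2q'')}$, and then performing the change of variable $s=t\sigma$, produces the target time-power $t^{-1+d/(2p)}$ times a Beta integral
\[
B\bigl(1-\tfrac{d}{2q''},\; -1+\tfrac{d}{2p}+\tfrac{d}{2q''}\bigr).
\]

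The key step --- and the only place where the precise range of $q''$ matters --- is verifying the convergence of this Beta integral. The exponent near $s=t$ requires $1-d/(2q'')>0$, which is exactly the assumption $1/q''<2/d$; the exponent near $s=0$ requires $-1+d/(2p)+d/(2q'')>0$, which is exactly $1/p+1/q''>2/d$. Both are in the hypotheses, so the Beta integral is finite, and the claimed bound
\[
\|\tB(u,z)(t)\|_p \le C\, \tau^{-1/2+d/2(1/p-1/q'')} t^{-1+d/(2p)} \BB{u}_p \BB{z}_p
\]
follows. I do not expect any genuine obstacle here; the only care needed is in checking that the two hypotheses inherited from Lemma~\ref{BesL1} and the two new ones together make all exponents admissible (in particular that $r\ge 1$, that H\"older applies, and that both endpoints of the Beta integral are integrable), which is a routine verification and mirrors the argument already carried out for Lemma~\ref{BesL2'}.
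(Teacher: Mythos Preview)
Your proof is correct and follows essentially the same route as the paper's: apply the heat-kernel estimate \eqref{heat} without gradient together with H\"older's inequality (with $1/r=1/p+1/q''$), insert the bound from Lemma~\ref{BesL1} and the definition of $\III{u}_{1,p}$, and reduce to a Beta integral whose convergence is guaranteed precisely by the two extra hypotheses $1/q''<2/d$ and $1/p+1/q''>2/d$. The paper's proof is more terse and does not spell out the Beta-integral convergence check, but the argument is identical.
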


\begin{proof}
Using \eqref{heat}, H{\"o}lder's inequality and then Lemma~\ref{BesL1} we get 
\begin{align*}
 \Vert \tB(u,z) (t) \Vert_p 
 &\le C \int_{0}^{t}{(t-s)^{-d/2q''} \Vert \nabla u(s) \Vert_p \Vert Lz(s) \Vert_{q''} \ds}
\\
&  \le C \tau^{-1/2+d/2(1/p-1/q'')} \int_{0}^{t}{(t-s)^{-d/2q''} s^{-3/2+d/2p} s^{-1/2+d/2q''} \ds} \III{u}_{1,p} \III{z}_{p}
\\
& \le C \tau^{-1/2+d/2(1/p-1/q'')} t^{-1+d/2p} \III{u}_{1,p} \III{z}_{p}.
\end{align*}\end{proof}

To find an admissible $p<d$, so that $q$ from Lemma~\ref{BesL2}, $q'$ from Lemma~\ref{BesL2'} and $q''$ from Lemma~\ref{BesL2''} exist, a sufficient condition is
\begin{equation} \label{Besrange}
\max\{1/p-1/d,2/d-1/p\} < 1/q,1/q',1/q'' <\min\{1/d,1-1/p\}.
\end{equation}
(The conditions on $q$ and $q''$ are in fact less restrictive than those needed for $q'$).
This is possible for $d\ge3$ and $d/2<p<d$.

\medskip

Our last lemma will be useful to see that the fluctuation of the solution, i.e. the difference
$u-{\rm e}^{\cdot\Delta}u_0$, not only belongs to the space $\mathcal{F}_p$ where $u$ will be constructed,
but also to $L^\infty(0,\infty;L^{d/2})$.

\begin{lemma} \label{BesCont}
Let $d \ge 3$ and $d/2<p<d$.
There exists a constant $C_{\tau}>0$ such that 
\begin{align} \label{Besa}
& \Vert \tB(u,u) \Vert_{d/2} \le C_{\tau} \BB{u}_p^2.
\end{align}
\end{lemma}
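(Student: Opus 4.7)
The strategy mirrors the proofs of Lemmas~\ref{BesL2'} and~\ref{BesL2''}, but this time we aim at the $L^{d/2}$ norm of $\tilde B(u,u)(t)$, uniformly in $t>0$. We introduce an auxiliary exponent $q$, to be chosen below, and set $1/r=1/p+1/q$. Apply the $L^r\to L^{d/2}$ heat semigroup estimate~\eqref{heat} and Hölder's inequality inside~\eqref{Btilde}:
\[
\|\tilde B(u,u)(t)\|_{d/2}\le C\int_0^t (t-s)^{-d/2(1/r-2/d)}\|\nabla u(s)\|_p\,\|Lu(s)\|_q\,\ds.
\]
For this estimate to apply we need $r\le d/2$, i.e.\ $1/p+1/q\ge 2/d$.

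Next I would bound $\|Lu(s)\|_q$ by Lemma~\ref{BesL1}, obtaining a factor $\tau^{-1/2+d/2(1/p-1/q)}s^{-1/2+d/(2q)}\III{u}_p$, and bound $\|\nabla u(s)\|_p\le s^{-3/2+d/(2p)}\III{u}_{1,p}$. Setting $\alpha:=d/(2p)+d/(2q)$, the integral becomes
\[
\|\tilde B(u,u)(t)\|_{d/2}\le C\,\tau^{-1/2+d/2(1/p-1/q)}\BB{u}_p^{\,2}\int_0^t(t-s)^{1-\alpha}s^{\alpha-2}\,\ds.
\]
The powers of $s$ have been engineered so that the exponents of $t$ cancel: by the Beta function identity, the integral equals $B(2-\alpha,\alpha-1)$, which is finite as long as $1<\alpha<2$, equivalently $2/d<1/p+1/q<4/d$. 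This yields the claimed bound with a constant $C_\tau$ absorbing the $\tau$-factor, uniform in $t$.

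The only real task is to check that such a $q$ exists under the assumptions $d\ge 3$ and $d/2<p<d$. The combined constraints are (i) $p\le q<pd/(d-p)$ from Lemma~\ref{BesL1}, (ii) $2/d\le 1/p+1/q$ from the heat estimate, and (iii) $2/d<1/p+1/q<4/d$ from the Beta integrability. Condition (iii) supersedes (ii); its upper bound gives $q>pd/(4p-d)$, which is automatic from $q\ge p$ since $p>d/2$; its lower bound gives $q<pd/(2p-d)$. So I only need $p\le q<\min\{pd/(d-p),\,pd/(2p-d)\}$, and since $p<d$ forces $p<pd/(2p-d)$ and $pd/(d-p)>p$, such a $q$ always exists. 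This compatibility check is the main (though routine) obstacle.
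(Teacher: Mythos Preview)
Your argument is essentially the same as the paper's: apply the $L^r\!\to\! L^{d/2}$ heat estimate, H\"older, and Lemma~\ref{BesL1}, then check that the resulting Beta integral converges. The paper simply fixes the explicit choice $1/q=3/(2d)-1/(2p)$ (so $1/r=3/(2d)+1/(2p)$), whereas you leave $q$ free and argue compatibility; the computations are otherwise identical.

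One small omission in your compatibility check: the heat estimate~\eqref{heat} (and H\"older) require $r\ge 1$, i.e.\ $1/p+1/q\le 1$. You did not include this among your constraints. For $d\ge 4$ it is automatic since $1/q\le 1/p<2/d\le 1/2$, but for $d=3$ and $p$ close to $3/2$ the choice $q=p$ can give $r<1$. The fix is trivial---take $q$ slightly larger (for instance the paper's choice, which gives $1/q<1/d\le 1-1/p$)---and your admissible interval for $q$ remains nonempty.
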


\begin{proof}
Here and below we assume that $d/2<p<d$, and $r$, $q=q'=q''$ given by
$$
1/r=3/(2d)+1/(2p);\quad 1/p+1/q=1/r.
$$
Therefore
$$
2/d-1/p < 1/q=3/(2d)-1/(2p) < 1/d \le 1-1/p,
$$
and moreover
$$
1/q>1/p-1/d,
$$
so the assumptions \eqref{Besrange} are satisfied.   
\medskip
In the four inequalities below, we use respectively \eqref{heat}, H{\"o}lder's inequality, Lemma~\ref{BesL1} and the change of variables $\tilde{s}=t/s$:
\begin{align*}
 \Vert \tB(u,u)(t) \Vert_{d/2} &\le C \int_{0}^{t}{(t-s)^{-d/2(1/r-2/d)} \Vert \nabla u(s) Lu(s) \Vert_{r} \ds} 
\\
& \le C \int_{0}^{t}{(t-s)^{1/4-d/(4p)} \Vert \nabla u(s) \Vert_p \Vert Lu(s) \Vert_q \ds}
\\
& \le C_{\tau} \int_{0}^{t}{(t-s)^{1/4-d/(4p)} s^{-5/4+d/(4p)} \ds} \cdot \BB{u}_p^2\\
& \le C_{\tau} \BB{u}_p^2.
\end{align*}
\end{proof}

%

\begin{proof}[Proof of Theorem~\ref{BesT1'}]
To prove the first theorem, we observe that if 
$u_0 \in \dot{B}_{p,\infty}^{-(2-d/p)}$, then 
$\nabla u_0\in \dot{B}_{p,\infty}^{-2(3/2-d/2p)}$, and the Besov norm of $\nabla u_0$ is equivalent to that of $u_0$.
Applying twice \cite[Theorem 2.34]{BCD-Book}, first to $u_0$, next to $\nabla u_0$, 
we deduce that 
${\rm e}^{\cdot \Delta} u_0 \in {\mathcal F}_p$
and 
\[
\|{\rm e}^{\cdot \Delta} u_0 \|_{\mathcal{F}_p} \lesssim 
\|u_0\|_{\dot B_{p,\infty}^{-(2-d/p)}}.\]
The usual fixed point lemma (see, e.g., \cite[Theorem 13.2]{Lem-Book1} or \cite[Lemma 1.1.1]{B-book}) applies in a ball of $\mathcal{F}_p$,
thanks to the bilinear estimate
\[
\BB{B(u,v)}_{p}+\BB{\tilde B(u,v)}_{p}\le C\tau^{-1/2+(d/2)(1/p-1/q)}
\BB{u}_p\BB{v}_p,
\]
that follows combining Lemmas~\ref{BesL1'},~\ref{BesL2'} and~\ref{BesL2''}
with $q=q'=q''$.
The solution $u$ is thus constructed in a ball of $\mathcal{F}_p$.

The fluctuation
$u-{\rm e}^{\cdot\Delta}u_0$ also belongs 
to $BC(0,\infty;L^{d/2})$. Indeed, for $t>0$, we have the estimate,
\[
\|u(t)-e^{t\Delta}u_0\|_{d/2}\le
\|B(u,u)(t)\|_{d/2} +
 \| \tB(u,u)(t)\|_{d/2} \le C_{\tau} \BB{u}_p^2,
\]
that follows combining Lemma~\ref{BesL4} and Lemma~\ref{BesCont}.
The continuity with respect to $t$ is standard and we skip it here (see \cite{BBB1}).
\end{proof}

\begin{remark}
\label{rem:local}
For more regular, but possibly large, initial data, local-in-time existence of solutions to (TM) can be proved in several ways.
For example, if $d\ge3$, $u_0\in L^{d/2}(\R^d)$ and $p$ is as in Theorem~\ref{BesT1'},
then there exists $T>0$ such that a solution $u\in \widetilde{\mathcal{F}}_{p,T}$ does exist.
Here, $\mathcal{F}_{p,T}$ is defined in the same way as $\mathcal{F}_p$, with the time interval $(0,T)$ replacing $(0,\infty)$. The space $\widetilde{\mathcal{F}}_{p,T}$ is the subspace of $\mathcal{F}_{p,T}$ of functions $v$ satisfying the additional condition
\[
\lim_{t\to0^+}\Bigl(t^{1-d/(2p)}\|v(t)\|_p+t^{3/2-d/(2p)}\|\nabla v(t)\|_p\Bigr)=0.
\]
A simple approximation argument (using the density of $L^p$ in $L^{d/2}$ and the
$L^{d/2}$-$L^p$ estimates for the heat hernel and its gradient) proves that if $u_0\in L^{d/2}(\R^d)$, then ${\rm e}^{t\Delta}u_0\in \widetilde{\mathcal{F}}_{p,T}$.
Thus, the size condition on the data needed to apply the usual fixed point theorem 
can be ensured just taking $T>0$ small enough (with a nontrivial dependence on $u_0$ and not just $|u_0|_{d/2}$, due to the approximation procedure). The solution that we obtain is easily proved to be in $BC(0,T;L^{d/2})$ and to be unique in $\widetilde{\mathcal{F}}_{p,T}$.
\end{remark}

\begin{remark}
An analogous analysis of (TM') is not possible since the bilinear term has a more singular stricture when using Besov spaces. This is one of the reasons why we need to introduce pseudomeasures.
\end{remark}

\section{{Pseudomeasures}}
\label{sec:PM}
 
In this section we will study both toy models (TM) and (TM') in pseudomeasure spaces.
We begin by recalling some definitions and statements of results from \cite{BBB1}.
The functional setting is more restrictive than in the previous section, but the size conditions will be less stringent when $\tau\gg1$ and the proofs will be shorter. Indeed, when working in pseudomeasure spaces, it is no longer necessary to treat the nonlinearity of the toy models as a perturbation to that of (PP).

Let $a\in\R$. We introduce the pseudomeasure space
\begin{equation}
\label{PMa}
\PM^a=
\{f\in \mathscr{S}'(\R^d)\colon \|f\|_{\PM^a}=\esssup_{\xi\in\R^d}|\xi|^a|\widehat f(\xi)|<\infty\},
\end{equation}
where $\widehat f$ denotes the Fourier transform of the tempered distribution $f$.

We will construct our solutions in the space
\begin{equation}
\Y_a=\{u\in L^\infty_{\rm loc}(0,\infty;\mathscr{S}'(\R^d))\colon
\|u\|_{\Y_a}=\esssup_{t>0,\,\xi\in\R^d}t^{1+(a-d)/2}|\xi|^a|\widehat u(\xi,t)|<\infty\}.
\end{equation}
When $a=d-2$, the space $\Y_{d-2}$, agrees with the space 
\[
\mathcal{X}=L^\infty(0,\infty;\PM^{d-2}),
\]
already used in \cite[Theorem 2.1]{BCGK} to establish a global existence  result
for the parabolic-elliptic Keller--Segel system {\rm (PE)} for $d\ge4$
and small initial data in $\PM^{d-2}$.
When $a\not=d-2$, our space $\Y_a$ is slightly larger than the space 
\[
\mathcal{Y}_a:=\Y_a\cap\mathcal{X}
\]
 considered in \cite[Section 4]{BCGK}  $(d\ge3)$
or in \cite{Rac} $(d=2)$.

We start recalling a result of~\cite{BBB1} for the classical parabolic-parabolic Keller--Segel system:
\begin{theorem}
\label{th:PM}
Let $d\ge2$, $\tau>0$ and $u_0\in \PM^{d-2}(\R^d)$.
There exists $\kappa_d>0$, depending only on $d$, such that if
\begin{equation}
\label{small-assu}
\begin{split}
&\|u_0\|_{\PM^{d-2}} <\kappa_d \max\Bigl\{1, \frac{3^3\,\tau}{({\rm e}^{}\ln\tau)^3}\Bigr\}
\end{split}
\end{equation}
then  {\rm(PP)} possesses a global mild solution~$u\in \mathcal{X}$.
More precisely:
\begin{itemize}
\item[-]
 If $\|u_0\|_{\PM^{d-2}} <\kappa_d$ then $u\in \mathcal{X}\cap \Y_{d-\frac43}$, and this solution is unique in the ball $\{u\colon \|u\|_{\Y_{d-\frac43}}< 2\kappa_d\}$.
\item[-]
If $\tau\ge {\rm e}^3$, then under the weaker condition $\|u_0\|_{\PM^{d-2}}< {3^3\kappa_d\,\tau}/{({\rm e}^{}\ln\tau)^3}$
we have $u\in \mathcal{X}\cap\Y_{d-4/\ln\tau}$, and $u$ is the unique solution in the ball 
$\{u\colon \|u\|_{\Y_{d-4/\ln\tau}}<2\kappa_d{3^3\,\tau}/{({\rm e}^{}\ln\tau)^3}\}$.
\end{itemize}
\end{theorem}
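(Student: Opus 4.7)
The plan is to reduce the coupled system to a single fixed-point equation for $u$ and then run a contraction argument in $\Y_a$ for a well-chosen $a$. Since $\varphi_0=0$, the second equation gives
\[
\varphi(t)=\tau^{-1}\int_0^t {\rm e}^{\tau^{-1}(t-s)\Delta}u(s)\ds,
\]
so that, taking $-\nabla\varphi$ as the drift and integrating the first equation against ${\rm e}^{t\Delta}$, the mild formulation of {\rm(PP)} becomes $u={\rm e}^{t\Delta}u_0+\mathcal{N}(u,u)$, where $\mathcal{N}$ is a bilinear operator whose Fourier-side kernel is explicit: up to constants,
\[
\widehat{\mathcal{N}(u,v)}(\xi,t)=\int_0^t\!\!\int_{\R^d}\xi\cdot\eta\,\frac{{\rm e}^{-(t-s)|\xi|^2}\,{\rm e}^{-\tau^{-1}s|\xi-\eta|^2}}{|\xi-\eta|^{2}}\,\widehat u(\eta,s)\widehat v(\xi-\eta,s)\,\dd\eta\,\ds,
\]
after solving the $\varphi$-equation back inside the time integral.

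First I would establish the linear estimate: since $|\widehat{{\rm e}^{t\Delta}u_0}(\xi)|\le{\rm e}^{-t|\xi|^2}|\widehat{u_0}(\xi)|$ and $|\widehat{u_0}(\xi)|\le|\xi|^{-(d-2)}\|u_0\|_{\PM^{d-2}}$, writing $|\xi|^a{\rm e}^{-t|\xi|^2}\le C_a\, t^{-(a-d+2)/2}|\xi|^{d-2}$ for $a>d-2$ gives $\|{\rm e}^{\cdot\Delta}u_0\|_{\Y_a}\lesssim C_a\|u_0\|_{\PM^{d-2}}$, with an explicit $C_a$ (a shifted $\Gamma$-type factor) that is well-controlled as $a\downarrow d-2$. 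An entirely analogous elementary computation on the $\PM^{d-2}$ norm gives the embedding ${\rm e}^{\cdot\Delta}u_0\in\mathcal{X}$.

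The core step is the bilinear estimate $\|\mathcal{N}(u,v)\|_{\Y_a}\le K(\tau,a,d)\|u\|_{\Y_a}\|v\|_{\Y_a}$. Plugging the $\Y_a$-bound $|\widehat u(\eta,s)|\le s^{-1-(a-d)/2}|\eta|^{-a}\|u\|_{\Y_a}$ into the formula above and doing the $\eta$-convolution in $\R^d$ (a Riesz-type integral that converges precisely when $a<d$ and $2a-d+2>0$), one is left with a time-integral of Beta-function type in $s/t$ whose value scales as $t^{-1-(a-d)/2}|\xi|^{-a}$ multiplied by a factor $\tau^{(a-d+2)/2}\,\beta(a,d)$, where $\beta(a,d)$ collects the Gaussian and Riesz integrals and the Beta function. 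This yields $K(\tau,a,d)=\tau^{(a-d+2)/2}\,\beta(a,d)$. The standard abstract bilinear fixed-point lemma then produces a global solution as soon as $4K\,\|{\rm e}^{\cdot\Delta}u_0\|_{\Y_a}<1$, uniqueness being automatic in the corresponding ball.

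Finally, I would optimize over $a\in(d-2,d)$. Taking $a=d-4/3$ decouples everything from $\tau$ (since $\tau^{(a-d+2)/2}$ becomes a fixed power), and one recovers the $\tau$-independent smallness condition $\|u_0\|_{\PM^{d-2}}<\kappa_d$. The sharper regime $\tau\ge{\rm e}^3$ comes from choosing $a=a(\tau)=d-4/\ln\tau$: then $\tau^{(a-d+2)/2}=\tau^{1-2/\ln\tau}={\rm e}^{-2}\tau$, while a careful inspection of $\beta(a,d)$ together with the constant $C_a$ in the linear estimate shows that both blow up like $(d-a)^{-3}=(\ln\tau/4)^3$ up to $d$-dependent constants. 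Balancing the three powers of $\ln\tau$ against the linear factor $\tau/{\rm e}^2$ produces exactly the condition $\|u_0\|_{\PM^{d-2}}<3^3\kappa_d\tau/({\rm e}\ln\tau)^3$, and the solution lies in $\mathcal{X}\cap\Y_{d-4/\ln\tau}$ with the announced uniqueness ball. The main obstacle is this last step: keeping precise track of every $(d-a)^{-1}$ factor arising from the Beta and Riesz integrals and from the linear estimate, to verify that they indeed conspire into the clean cubic dependence on $\ln\tau$ claimed in the statement.
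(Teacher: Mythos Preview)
Your overall architecture---reduce to a fixed-point equation, prove a bilinear bound on $\Y_a$, then optimize over $a$---is exactly the route taken in the paper (Theorem~\ref{th:PM} itself is only recalled from~\cite{BBB1}, but the parallel proof of Theorem~\ref{th:TM} displays the method in full). However, two concrete errors derail the argument.

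First, your Fourier expression for $\mathcal{N}$ is not the bilinear operator of {\rm(PP)}. Since $\widehat\varphi(\eta,s)=\tau^{-1}\int_0^s {\rm e}^{-\tau^{-1}(s-\sigma)|\eta|^2}\widehat u(\eta,\sigma)\,\dd\sigma$, substituting into the Duhamel formula for $u$ produces a \emph{triple} integral in $(s,\sigma,\eta)$, with $\widehat v(\eta,\sigma)$ evaluated at the inner time $\sigma$, a prefactor $\tau^{-1}$, and the kernel ${\rm e}^{-\tau^{-1}(s-\sigma)|\eta|^2}$; compare the explicit formula~\eqref{eq:bilftm} for the toy model. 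Your double integral with the factor ${\rm e}^{-\tau^{-1}s|\xi-\eta|^2}/|\xi-\eta|^{2}$ does not arise from this computation.

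Second, and decisively, your claimed scaling $K(\tau,a,d)=\tau^{(a-d+2)/2}\beta(a,d)$ has the wrong sign in the exponent. For every $a\in(d-2,d)$ this is a \emph{positive} power of $\tau$, so the smallness condition $4K\|{\rm e}^{\cdot\Delta}u_0\|_{\Y_a}<1$ would make the admissible size of $u_0$ \emph{shrink} with $\tau$, whereas the theorem asserts it grows like $\tau/(\ln\tau)^3$. Your own check already signals the problem: with $a=d-\tfrac43$ you obtain $\tau^{1/3}$, not a $\tau$-independent bound as the first bullet requires. The correct $\tau$-dependence comes precisely from the inner $\sigma$-integral that you have suppressed: applying Lemma~\ref{lem:integ} to $\int_0^s {\rm e}^{-\tau^{-1}(s-\sigma)|\eta|^2}\sigma^{-1+(d-a)/2}\,\dd\sigma$ with a \emph{free} parameter $b\in(0,1]$ produces a factor $(\tau^{-1}|\eta|^2)^{-b}=\tau^{b}|\eta|^{-2b}$, and combined with the prefactor $\tau^{-1}$ the net power is $\tau^{b-1}\le 1$. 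The remaining Riesz convolution (Lemma~\ref{lem:convo}) and the outer $s$-integral then lead to the choice $a=d-\tfrac43 b$ and the constant $\approx b^{-3}$; see~\eqref{eq:bilp'}. Minimizing $b^{-3}\tau^{b-1}$ over $b$ gives $b=3/\ln\tau$ and hence the factor $({\rm e}\ln\tau)^3/(3^3\tau)$. So the missing idea is not merely bookkeeping of $(d-a)^{-1}$ factors: it is the extra degree of freedom $b$ carried by the inner Duhamel integral, without which the favourable $\tau$-dependence cannot emerge at all.
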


The analog of Theorem~\ref{th:PM} for our two toy models is the following result:

\begin{theorem}
\label{th:TM}
For $d\ge3$ the assertion of Theorem~\ref{th:PM} holds for both systems {\rm (TM)}
and {\rm (TM')}.
\end{theorem}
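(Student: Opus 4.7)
The plan is to follow the pseudomeasure strategy used for (PP) in \cite{BBB1}, working directly with the nonlinearities of (TM) and (TM') on the Fourier side. Since $\varphi_0=0$, Duhamel's formula for the linear heat equation in $\varphi$ gives $\widehat\varphi(\xi,s)=\int_0^s\tau^{-1}e^{-\tau^{-1}(s-s')|\xi|^2}\widehat u(\xi,s')\,ds'$, and inserting this in the mild formulation of the first equation turns each system into a quadratic fixed-point problem $u=e^{t\Delta}u_0+B_\star(u,u)$. On the Fourier side, $\widehat{B_{\rm TM}(u,v)}(\xi,t)$ is the iterated space-time integral of $e^{-(t-s)|\xi|^2}\,\widehat u(\xi-\eta,s)\,\tau^{-1}|\eta|^2 e^{-\tau^{-1}(s-s')|\eta|^2}\,\widehat v(\eta,s')$ over $s'\in(0,s)$, $s\in(0,t)$, $\eta\in\R^d$, while for (TM') the factor $\widehat u(\xi-\eta,s)$ is replaced by the symmetric Duhamel integral $\int_0^s\tau^{-1}|\xi-\eta|^2 e^{-\tau^{-1}(s-s'')|\xi-\eta|^2}\widehat u(\xi-\eta,s'')\,ds''$, so that two copies of the kernel appear.

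The core step is to establish the bilinear bound $\|B_\star(u,v)\|_{\Y_a}\lesssim \beta_\star(\tau,a)\|u\|_{\Y_a}\|v\|_{\Y_a}$ reproducing the constant $\beta_\star(\tau,a)$ of the (PP) case. Setting $\alpha=1+(a-d)/2$ and using the pointwise bound $|\widehat v(\eta,s')|\le(s')^{-\alpha}|\eta|^{-a}\|v\|_{\Y_a}$, the inner time integral obeys the two-scale estimate
\[
\int_0^s \tau^{-1}|\eta|^2\, e^{-\tau^{-1}(s-s')|\eta|^2}(s')^{-\alpha}\,ds' \;\lesssim\; \min\bigl\{\tau^{-1}|\eta|^2 s^{1-\alpha},\; s^{-\alpha}\bigr\}\;\lesssim\; \tau^{-\theta}|\eta|^{2\theta}s^{\theta-\alpha}
\]
for any $\theta\in[0,1]$. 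The resulting spatial convolution is of Riesz type, $\int_{\R^d}|\xi-\eta|^{-a}|\eta|^{-a+2\theta}\,d\eta\approx C(a,\theta,d)|\xi|^{d-2a+2\theta}$, valid under $2\theta<a<d$ and $a>d/2+\theta$, and after integrating in $s$ against $e^{-(t-s)|\xi|^2}$ one obtains an estimate of the correct $\Y_a$-homogeneity with constant $\propto \tau^{-\theta}\,C(a,\theta,d)$. Optimizing with $\theta$ close to~$1$ and $a$ close to~$d$ reproduces the $\tau^{-1}(\ln\tau)^3$ scaling of Theorem~\ref{th:PM}, the logarithms coming from the near-endpoint blow-up of $C(a,\theta,d)$.

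Granted this bilinear bound, the standard Picard contraction lemma (\cite[Lemma 1.1.1]{B-book}) produces the global solution in a ball of $\Y_{d-4/3}$ when $\tau=O(1)$ and of $\Y_{d-4/\ln\tau}$ when $\tau\ge{\rm e}^3$, and membership in $\mathcal{X}=L^\infty_t\PM^{d-2}$ follows from a parallel bilinear estimate at the endpoint $a=d-2$ together with the trivial linear bound $|\widehat{e^{t\Delta}u_0}|\le|\widehat u_0|$. The main obstacle is the (TM') case: two copies of the Duhamel kernel $\tau^{-1}|\cdot|^2 e^{-\tau^{-1}(\cdot)|\cdot|^2}$ introduce an extra factor $|\xi-\eta|^{2\theta'}$ in the Riesz integral, which then takes the form $\int|\xi-\eta|^{-(a-2\theta')}|\eta|^{-(a-2\theta)}\,d\eta$. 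This integral still converges, with the correct $|\xi|$-homogeneity modulo the shifts of powers, under the strengthened condition $a>d/2+\theta+\theta'$, which is compatible with $d\ge3$ and the range of $a$ prescribed by Theorem~\ref{th:PM}. The corresponding $\tau^{-\theta-\theta'}$ scaling is at least as favorable as the $\tau^{-\theta}$ needed for (TM), so the smallness assumption \eqref{small-assu} suffices \emph{a fortiori} and no new $\tau$-loss appears in the fixed-point argument.
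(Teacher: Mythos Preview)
Your approach is essentially the paper's: bound the bilinear operators on the Fourier side by combining the time-integral estimate of Lemma~\ref{lem:integ} with the Riesz convolution of Lemma~\ref{lem:convo}, then optimize a free parameter to extract the $\tau$-gain. Your interpolation parameter $\theta$ is the paper's $1-b$, and your two-scale bound $\min\{\tau^{-1}|\eta|^2 s^{1-\alpha},\,s^{-\alpha}\}$ is exactly what Lemma~\ref{lem:integ} encodes.

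The only place where your sketch is thin is the (TM') endgame. Saying that ``$\tau^{-\theta-\theta'}$ is at least as favorable as $\tau^{-\theta}$'' glosses over the fact that the constraints you yourself list do not let $\theta+\theta'$ approach~$2$: besides the Riesz condition $a>d/2+\theta+\theta'$, the outer $s$-integral forces $\theta+\theta'>a-d+1$, and matching the $\Y_a$-homogeneity pins a relation between $a$, $\theta+\theta'$, and the last exponent~$\gamma$. The upshot (which the paper works out by setting $\theta=\theta'=1-b$, reducing to a single parameter $\beta=2b-1\in(0,1]$, and taking $a=d+\tfrac43-\tfrac83 b$) is that $\theta+\theta'$ is forced toward~$1$, not~$2$, and the constant again blows up like the cube of the inverse distance to the endpoint, reproducing precisely the $(\ln\tau)^3$ loss. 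So your conclusion is right, but the reason is not that (TM') enjoys extra $\tau$-decay; it is that the admissible window for $\theta+\theta'$ collapses to the same near-$1$ regime as for (TM), with the same endpoint behaviour of the constant. Filling in this bookkeeping---and checking separately the $\mathcal X$-bound by the alternate choice of~$\gamma$---completes the argument exactly as in the paper.
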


The proof of Theorem~\ref{th:TM} is conceptually the same as that of Theorem~\ref{th:PM}, but different technical restrictions appear on the
parameters that are involved in the estimates.
For reader's convenience we briefly outline the proof of Theorem~\ref{th:TM}.
This will allow us to illustrate why the case $d=2$ should be excluded
for the toy model.

\begin{lemma}
\label{lem:convo}
Let $0<\alpha,\beta<d$ such that $\alpha+\beta>d$.
Then
\[
|x|^{-\alpha}*|x|^{-\beta}=C(\alpha,\beta,d)|x|^{-(\alpha+\beta)+d},
\]
with
\begin{equation}
\label{eq:CC}
C(\alpha,\beta,d)=\pi^{d/2}
\frac{\Gamma(\frac{d-\alpha}{2})\Gamma(\frac{d-\beta}{2})\Gamma(\frac{\alpha+\beta-d}{2})}{\Gamma(\frac{\alpha}{2})\Gamma(\frac{\beta}{2})\Gamma({d-\frac{\alpha+\beta}{2})}}.
\end{equation}
\end{lemma}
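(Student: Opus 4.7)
The strategy is to compute the convolution on the Fourier side, where it becomes a pointwise product, and then invert, using the classical Riesz-kernel identity. First, I would recall that for any $0<\alpha<d$ one has in $\mathscr{S}'(\R^d)$
\[
\widehat{|\cdot|^{-\alpha}}(\xi)=\gamma_\alpha\,|\xi|^{\alpha-d},
\qquad \gamma_\alpha:=2^{d-\alpha}\pi^{d/2}\frac{\Gamma(\tfrac{d-\alpha}{2})}{\Gamma(\tfrac{\alpha}{2})},
\]
with respect to the convention $\widehat f(\xi)=\int f(x)e^{-i\xi\cdot x}\dx$ fixed in the paper. Under the standing assumption $\alpha,\beta\in(0,d)$ and $\alpha+\beta>d$, the exponent $\mu:=\alpha+\beta-d$ also lies in $(0,d)$, so the identity above applies simultaneously to each of $|\cdot|^{-\alpha}$, $|\cdot|^{-\beta}$, $|\cdot|^{-\mu}$.

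Next, a brief justification that the convolution is well defined. Splitting $\R^d$ at a fixed $x\neq0$ into the regions $\{|y|\le|x|/2\}$, $\{|x-y|\le|x|/2\}$ and their complement, one uses $\alpha,\beta<d$ to control each factor near its singularity and the hypothesis $\alpha+\beta>d$ to gain integrability at infinity. The resulting pointwise-defined function is radial, locally integrable, and homogeneous of degree $-(\alpha+\beta-d)$, hence a well-defined tempered distribution.

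Passing to Fourier variables, convolution becomes product, so
\[
\widehat{|x|^{-\alpha}*|x|^{-\beta}}(\xi)=\gamma_\alpha\gamma_\beta\,|\xi|^{(\alpha-d)+(\beta-d)}=\frac{\gamma_\alpha\gamma_\beta}{\gamma_\mu}\,\widehat{|\cdot|^{-\mu}}(\xi),
\]
where in the last equality I used once more the Riesz identity for exponent $\mu$, which is legitimate since $\mu\in(0,d)$. Injectivity of the Fourier transform on $\mathscr{S}'(\R^d)$ then yields
\[
|x|^{-\alpha}*|x|^{-\beta}=\frac{\gamma_\alpha\gamma_\beta}{\gamma_\mu}\,|x|^{-(\alpha+\beta-d)}.
\]

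The final step is pure bookkeeping. The three powers of $2$ cancel exactly, since the exponents in $\gamma_\alpha\gamma_\beta/\gamma_\mu$ sum to $(d-\alpha)+(d-\beta)-(2d-\alpha-\beta)=0$; of the three $\pi^{d/2}$ factors one survives in the quotient; and noting $\tfrac{d-\mu}{2}=d-\tfrac{\alpha+\beta}{2}$, the six Gamma factors regroup into precisely the expression~\eqref{eq:CC}. The main (quite mild) obstacle here is purely the bookkeeping and the consistency of the Fourier-transform convention with the explicit form of $\gamma_\alpha$; once this is fixed, the displayed value of $C(\alpha,\beta,d)$ falls out immediately.
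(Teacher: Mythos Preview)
Your proof is correct: the Fourier-side computation via the Riesz-kernel identity is the standard route to this formula, and your bookkeeping (cancellation of the powers of $2$, survival of a single $\pi^{d/2}$, and the regrouping of the Gamma factors using $\tfrac{d-\mu}{2}=d-\tfrac{\alpha+\beta}{2}$) is accurate. The paper itself does not give a proof at all---it simply cites \cite[Lemma~2.1]{BCGK}---so there is no ``paper's own proof'' to compare against; your argument is exactly the classical one that reference would supply.
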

\begin{proof}
See \cite[Lemma 2.1]{BCGK}.
\end{proof}

Another Lemma used in~\cite{BBB1} (refining \cite[Lemma 3.2]{Rac}), is the following:
\begin{lemma}
\label{lem:integ}
Let $s>0$, $A>0$, $\delta>0$, $0\le b\le1$ and $\delta_*=\min\{\delta,1\}$.
Then 
\[
\int_0^s {\rm e}^{-(s-\sigma)A}\sigma^{-1+\delta}\dd s\le 4\delta_*^{-1}A^{-b}s^{\delta-b}.
\]
\end{lemma}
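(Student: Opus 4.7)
The plan is to reduce the general bound to the two endpoint cases $b=0$ and $b=1$, and then recover the full range $b\in[0,1]$ by a simple interpolation (geometric mean) argument.

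First I would handle the trivial case $b=0$: bounding the exponential by $1$ gives
\[
\int_0^s {\rm e}^{-(s-\sigma)A}\sigma^{-1+\delta}\dd\sigma\le \int_0^s \sigma^{-1+\delta}\dd\sigma=\frac{s^\delta}{\delta}\le \delta_*^{-1}s^\delta,
\]
since $\delta_*\le\delta$. This is already stronger than the claim (constant $1\le 4$) in the case $b=0$.

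Next, for the case $b=1$ I would split the interval of integration at $\sigma=s/2$. On $[0,s/2]$ one has ${\rm e}^{-(s-\sigma)A}\le {\rm e}^{-As/2}\le 2/(As)$ (using ${\rm e}^{-x}\le 1/x$ for $x>0$), so this piece is bounded by $\frac{2}{As}\cdot\frac{(s/2)^\delta}{\delta}\le \frac{2}{\delta A}s^{\delta-1}$. On $[s/2,s]$ I would substitute $u=s-\sigma$ and bound $\sigma^{-1+\delta}=(s-u)^{-1+\delta}$ by either $2^{1-\delta}s^{-1+\delta}$ (if $\delta\le 1$) or by $s^{-1+\delta}$ (if $\delta\ge 1$); in either case the remaining integral of ${\rm e}^{-uA}$ is bounded by $1/A$. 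Combining the two pieces and distinguishing according to whether $\delta_*=\delta$ or $\delta_*=1$, one arrives at the bound $4\delta_*^{-1}A^{-1}s^{\delta-1}$, which is the claim for $b=1$.

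Finally, for arbitrary $b\in[0,1]$ I would use the elementary interpolation identity $I=I^{1-b}I^{b}$, where $I$ denotes the integral to be estimated. Inserting the two endpoint bounds just established yields
\[
I\le \bigl(\delta_*^{-1}s^\delta\bigr)^{1-b}\bigl(4\delta_*^{-1}A^{-1}s^{\delta-1}\bigr)^{b}=4^{b}\delta_*^{-1}A^{-b}s^{\delta-b}\le 4\delta_*^{-1}A^{-b}s^{\delta-b},
\]
which is exactly the asserted inequality.

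I do not expect any serious obstacle here: the only slightly delicate point is the bookkeeping of constants in the $b=1$ case, where one must check that both pieces of the splitting contribute at most $2\delta_*^{-1}A^{-1}s^{\delta-1}$, which requires distinguishing $\delta\le 1$ from $\delta>1$ to correctly absorb the factor $2^{1-\delta}$ or to invoke $\delta_*=1$.
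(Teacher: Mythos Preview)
Your argument is correct. The endpoint case $b=0$ is immediate, and your splitting at $\sigma=s/2$ for $b=1$ works: the first piece gives at most $\frac{2^{1-\delta}}{\delta A}s^{\delta-1}\le \frac{2}{\delta A}s^{\delta-1}$, while the second piece gives at most $\frac{2^{1-\delta}}{A}s^{\delta-1}$ (for $\delta\le1$) or $\frac{1}{A}s^{\delta-1}$ (for $\delta\ge1$). In either regime the sum is bounded by $4\delta_*^{-1}A^{-1}s^{\delta-1}$, as you claim; the interpolation $I=I^{1-b}I^{b}$ then cleanly yields the full range $0\le b\le 1$.

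As for the comparison: the paper does not actually supply a proof of this lemma. It is quoted as a known estimate, with references to~\cite{BBB1} and to~\cite[Lemma~3.2]{Rac}, and no argument is given here. Your self-contained proof via the two endpoint cases and geometric-mean interpolation is therefore a welcome addition rather than a duplication; it is also the natural approach for this type of bound, and is essentially the argument one finds in the cited sources.
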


\subsection{Pseudomeasures for the system {\rm (TM)}}

Taking the Fourier transform in {\rm (TM)} we get
\begin{equation}\label{FTM}
\begin{split}
\widehat u(\xi,t)
&={\rm e}^{-t|\xi|^2}\widehat u_0(\xi)
-\int_0^t {\rm e}^{-(t-s)|\xi|^2}\widehat{u\Delta \varphi}(\xi,s)\dd s
\\
&={\rm e}^{-t|\xi|^2}\widehat u_0(\xi)
  +(2\pi)^{-d}\int_0^t\!\!\int_{\R^d}{\rm e}^{-(t-s)|\xi|^2}\widehat{u}(\xi-\eta,s)|\eta|^2\widehat\varphi(\eta,s)\dd\eta\dd s.\\
\end{split}
\end{equation} 
Notice that
\[
\widehat \varphi(\eta,s)=\tau^{-1} \int_0^s {\rm e}^{-\tau^{-1}(s-\sigma)|\eta|^2}\widehat
u(\eta,\sigma)\dd \sigma.
\]
Therefore,
\begin{equation}
\label{FTMsol}
\begin{split}
&\widehat u(\xi,t)
={\rm e}^{-t|\xi|^2}\widehat u_0(\xi)\\
&\qquad\qquad
 +(2\pi)^{-d}\int_0^t\!\!\int_0^s\!\!\int_{\R^d} 
\frac{|\eta|^2}{\tau} 
{\rm e}^{-(t-s)|\xi|^2}{\rm e}^{-\frac{1}{\tau}(s-\sigma)|\eta|^2}
\widehat u(\xi-\eta,s)\widehat u(\eta,\sigma)\dd\eta\dd \sigma\dd s.
\end{split}
\end{equation}

By definition, by a solution  to {\rm (TM)} on $(0,T)$, with $T>0$, we mean  any function $u$ on $(0,\infty)$ with values in 
$\mathscr{S}'(\R^d)$ such that for a.e. $0<t<T$, 
$\widehat u(t,\cdot)$ is locally integrable in $\R^d$ and
for a.e. $0<s<t<T$ and $\xi\in\R^d$ the integrand in~\eqref{FTMsol}
is integrable in $(0,t)\times(0,s)\times \R^d$,
and $\eqref{FTMsol}$ holds for a.e. $(\xi,t)\in \R^d\times(0,T)$.

Such solutions can be constructed via the standard fixed point
algorithm as the limit $u=\lim_{k\to\infty} u_k$, where
\[
u_{k+1}={\rm e}^{t\Delta}u_0-\int_0^t {\rm e}^{(t-s)\Delta}\frac{1}{\tau}\biggl[u_k(s)\Delta\int_0^s {\rm e}^{\frac{1}{\tau}(s-\sigma)\Delta}u_k(\sigma)\dd \sigma\biggr]\dd s,
\] 
for $k=1,2,\dots\ $, 
in an appropriate function space. We already gave an instance of such a  construction for (PP), using pseudomeasure spaces in \cite[Sec. 2]{BBB1}.

\begin{proof}[Proof of Theorem~\ref{th:TM}. The case of (TM)]
We can write the problem {\rm (TM)}
in the integral form
\begin{equation*}
u(t)={\rm e}^{t\Delta}u_0+B'(u,u)(t),
\end{equation*}
where $B'$ is the bilinear operator
\begin{equation}
\label{eq:bilftm}
\widehat{B'(u,v)}(\xi,t)=
(2\pi)^{-d}\int_0^t\!\!\int_0^s\!\!\int_{\R^d}
\frac{|\eta|^2}{\tau}
{\rm e}^{-(t-s)|\xi|^2}{\rm e}^{-\frac{1}{\tau}(s-\sigma)|\eta|^2}
\widehat u(\xi-\eta,s)\widehat v(\eta,\sigma)\dd\eta\dd \sigma\dd s.
\end{equation}
Proceeding as in \cite{BBB1}, assuming for simplicity that $\|u\|_{\Y_a}=\|v\|_{\Y_a}=1$ and applying Lemma~\ref{lem:integ}, Lemma~\ref{lem:convo} and then again Lemma~\ref{lem:integ} with $\gamma=-b+1+(d-a)/2$ playing the role of $b$ we get the estimate:
\begin{align*}
 |\widehat{B'(u,v)}|(\xi,t)&\le (2\pi)^{-d} \tau^{-1} \int_0^t\!\!\int_0^s\!\!\int_{\R^d} {\rm e}^{-(t-s)|\xi|^2}{\rm e}^{-\frac{1}{\tau}(s-\sigma)|\eta|^2} s^{-1+(d-a)/2} \sigma^{-1+(d-a)/2} |\xi-\eta|^{-a} |\eta|^{-a+2}   \dd\eta\dd \sigma\dd s
\\
& \le \frac{8\tau^{b-1}}{(2\pi)^{d} (d-a)_*} \int_0^t\!\!\int_{\R^d} {\rm e}^{-(t-s)|\xi|^2} s^{-1+d-a-b}
|\xi-\eta|^{-a} |\eta|^{-a+2-2b} \dd\eta \dd s
\\
& \le \frac{8\tau^{b-1}C(a,a-2+2b,d)}{(2\pi)^{d} (d-a)_*} \int_0^t {\rm e}^{-(t-s)|\xi|^2} s^{-1+d-a-b}
|\xi|^{-2a+2-2b+d} \dd s
\\
& \le K'\tau^{b-1}|\xi|^{-a}t^{-1-(a-d)/2},
\end{align*}
where,
\begin{equation}
\label{eq:KK'}
K'=K'(a,b,d)=
 \frac{32\, C(a,a-2+2b,d)}{(2\pi)^d(d-a)_*(d-a-b)_*}.
\end{equation} 
However, the conditions for the validity of this estimate are not the same as for
the analogous estimate established for (PP) in~\cite{BBB1},
because we needed to make a different choice for the parameters. Namely, we need here
$a<d$ and $0\le b\le 1$ in the first application of Lemma~\ref{lem:integ}. We need also
$0<a<d$, $0<a+2b-2<d$ and $2a+2b-2>d$ for the application of Lemma~\ref{lem:convo}. And we
finally need $d-a-b>0$ and $0\le\gamma\le1$, with $\gamma= -b+1+(d-a)/2$, for the second application of Lemma~\ref{lem:integ}.
All these conditions are not compatible when $d=2$.
When $d\ge3$ they reduce to:
\begin{equation} \label{tmconda}
 d=3\colon \qquad 3-2b\le a,\qquad \textstyle\frac52-b<a<3-b, \qquad 0< b\le 1
\end{equation}
and
\begin{equation} \label{tmcondb}
d\ge4\colon\qquad
d-2b\le a<d-b, \qquad a\not=2,\qquad 0<b\le 1.
\end{equation}
For $d=3$ and $\frac32<a<3$, or for $d\ge4$, $d-2\le a<d$ and $a\not=2$, one can always find $b$ satisfying such conditions. Hence the bilinear operator $B'\colon\Y_a\times \Y_a\to\Y_a$ is continuous when
$a$ is in such ranges.
In this range,  $B'$ is continuous also as an operator $B'\colon\Y_a\times\Y_a\to\mathcal{X}$.
Indeed, to see this we
just need to change the choice of the parameter $\gamma$, and to take $\gamma=d-a-b$: we should now replace the previous condition $0\le-b+1+(d-a)/2\le1$ with the new condition $0\le d-a-b\le1$. But this new condition is satisfied given~\eqref{tmcondb}.

Now, for any $0<b\le1$, we can always choose, for example, $a=d-\frac43b$ in a such way that the required conditions on $a$ hold.
Moreover, $K'(d-\frac43b,b,d)\approx b^{-3}$ as $b\searrow 0$ and $K'(d-\frac43b,b,d)$ remains bounded
as $0<b\le1$ with $b$ bounded away from zero.
So, the following esimate 
holds:
\begin{equation}
\label{eq:bilp'}
\|B'(u,v)\|_{\Y_{d-\frac43b}}\le \frac{1}{4\kappa_d'}\, b^{-3}\tau^{b-1} \|u\|_{\Y_{d-\frac43b}}\|v\|_{\Y_{d-\frac43b}}
\qquad(0<b\le 1),
\end{equation}
for some constant $\kappa_d'>0$ only depending on~$d\ge3$.
For the system (TM), the remainder of the proof is now carried just like in Theorem~\ref{th:PM}.

\end{proof}

\begin{remark}
We do not know if the above result for {\rm (TM)} holds for $d=2$.
\end{remark}

\subsection{Pseudomeasures for the system {\rm (TM')}}

The goal of this subsection is to establish the proof, for the system (TM'), of Theorem~\ref{th:TM}.

\begin{proof}[Proof of Theorem~\ref{th:TM}. The case of (TM')]

The bilinear operator associated with {\rm (TM')} is

\begin{equation}
\label{bilTM'}
B''(u,v)(t)=\int_0^t {\rm e}^{(t-s)\Delta}(\Delta\varphi)(\Delta\psi)(s)\dd s 
\quad\text{with}\quad
\left\{
\begin{aligned}
&\varphi(s)=\frac{1}{\tau}\int_0^s{\rm e}^{(s-\sigma)\tau^{-1}\Delta}u(s)\dd s,\\
&\psi(s)=\frac{1}{\tau}\int_0^s{\rm e}^{(s-\lambda)\tau^{-1}\Delta}v(\lambda)\dd \lambda.
\end{aligned}
\right.
\end{equation}  
It is convenient to rewrite $B''(u,v)$ in terms of its Fourier transform,

\begin{equation*}
\widehat{B''(u,v)}(\xi,t)=
\frac{1}{(2\pi)^d}\int_0^t {\rm e}^{-(t-s)|\xi|^2}(|\cdot|^2\widehat\varphi)*(|\cdot|^2\widehat\psi)(\xi,s)\dd s,
\end{equation*}
or, more explicitly,  
\begin{equation}
\label{bilfTM'}
\begin{split}
\widehat{B''(u,v)}(\xi,t)
= \frac{\tau^{-2}}{(2\pi)^d}
  \int_0^t\!\!\!\int_0^s\!\!\!\int_0^s\!\!\!\int_{\R^d}
  {\rm e}^{-(t-s)|\xi|^2} &{\rm e}^{-(s-\sigma)|\xi-\eta|^2\tau^{-1}}{\rm e}^{-(s-\lambda)|\eta|^2\tau^{-1}}
\\
  &\times |\xi-\eta|^2|\eta|^2\widehat u(\xi-\eta,\sigma)\widehat v(\eta,\lambda)
  \dd s\dd\lambda\dd\sigma\dd\eta.
\end{split}
\end{equation} 
When $u$ and $v$ both belong to $\Y_a$ (with $\|u\|_{\Y_a}=\|v\|_{\Y_a}=1$ for simplicity)
we can estimate
\begin{equation}
\begin{split}
|\widehat{B''(u,v)}|(\xi,t)
\le \frac{\tau^{-2}}{(2\pi)^d}
  \int_0^t\!\!\!\int_0^s\!\!\!\int_0^s\!\!\!\int_{\R^d}
  &{\rm e}^{-(t-s)|\xi|^2}{\rm e}^{-(s-\sigma)|\xi-\eta|^2\tau^{-1}}{\rm e}^{-(s-\lambda)|\eta|^2\tau^{-1}}
\\
  &\;\times |\xi-\eta|^{2-a}|\eta|^{2-a}\sigma^{-1+(d-a)/2}\lambda^{-1+(d-a)/2}
  \dd s\dd\lambda\dd\sigma\dd\eta.
\end{split}
\end{equation}
Applying twice~Lemma~\ref{lem:integ} with $A=|\xi-\eta|^2\tau^{-1}$ or
$A=|\eta|^2\tau^{-1}$, and in both cases $\delta=(d-a)/2$,
we obtain, for any $0\le b\le 1$,
\begin{equation}
\begin{split}
|\widehat{B''(u,v)}|(\xi,t)
\le \frac{64\,\tau^{-2+2b}}{(2\pi)^d(d-a)_*^2}
  \int_0^t\!\!\!\int_{\R^d}
  {\rm e}^{-(t-s)|\xi|^2}
  |\xi-\eta|^{2-a-2b}|\eta|^{2-a-2b} s^{d-a-2b}
  \dd s \dd\eta.
\end{split}
\end{equation}
Applying now Lemma~\ref{lem:convo} with $\alpha=\beta=a+2b-2$ we get
\begin{equation*}
\begin{split}
|\widehat{B''(u,v)}|(\xi,t)
\le 
  \frac{64\,C(a+2b-2,a+2b-2,d)\,\tau^{-2+2b}}{(2\pi)^d(d-a)_*^2}  
  |\xi|^{4-2a-4b+d}\int_0^t  {\rm e}^{-(t-s)|\xi|^2}
  s^{-1+(1+d-a-2b)}
  \dd s.
\end{split}
\end{equation*}
The last step is the application of Lemma~\ref{lem:integ} with $A=|\xi|^2$ and 
$\delta=1+d-a-2b$. We find, for any $0\le \gamma\le1$,
\begin{equation}
\begin{split}
|\widehat{B''(u,v)}|(\xi,t)
\le K''(a,b,d)\,\tau^{-2+2b}\,
  |\xi|^{4-2a-4b+d-2\gamma} t^{1+d-a-2b-\gamma},
\end{split}
\end{equation}
where
\begin{equation}
K''(a,b,d)=\frac{256\,C(a+2b-2,a+2b-2,d)}{(2\pi)^d(d-a)_*^2(1+d-a-2b)_*} .
\end{equation}
We want $B''(u,v)$ to belong to $\Y_a$: this requires
$4-2a-4b+d-2\gamma=-a$ and $1+d-a-2b-\gamma=-1+(d-a)/2$.
Both equalities are satisfied when
\[
\gamma=2+\frac{d}{2}-\frac{a}{2}-2b.
\] 
Let us collect all the conditions that we need on $a$ and $b$ for the applicability of the previous lemmas:
we needed $0\le b\le 1$, $d-a>0$ for the first application of Lemma~\ref{lem:integ},
$0<a+2b-2<d$, $2a+4b-4>d$ for the application of Lemma~\ref{lem:convo},
$1+d-a-2b>0$ for the second application of Lemma~\ref{lem:integ} 
and finally 
$0\le 4+d-a-4b\le 2$ (the latter condition correspond to the restriction $0\le \gamma\le1$).
Thus, we can collect all these conditions in the following system
\begin{equation}
\label{sys''}
\begin{cases}
\frac12< b\le 1,\\
\frac{d}{2}-2b+2<a<d+1-2b,\\
2+d-4b\le a<d.
\end{cases}
\end{equation}
Conditions~\eqref{sys''} are not compatible for $d=1,2$. For $d=3$ and $\frac{3}{2}<a<3$,
or otherwise for $d\ge4$ and $d-2\le a<d$, $a\not=2$, we can always find $b$ satisfying
the system.
Therefore, we deduce that
\[
B''\colon\Y_a\times \Y_a \to \Y_a\quad\text{is continuous}
\]
for either 
\[
\begin{cases}
d=3\\
\frac{3}{2}<a<3
\end{cases}
\quad\text{or}
\quad
\begin{cases}
d\ge4\\
d-2\le a<d, \quad a\not=2.
\end{cases}
\]  
For each $\frac12<b\le1$, a suitable choice for $a$ which is 
compatible with system~\eqref{sys''}
is, e.g., 
\[
a=d+\textstyle\frac{4}{3}-\frac{8}{3}b \qquad (d\ge3) 
\]
(for $d\ge4$ we could also choose, e.g., $a=d+\frac32-3b$).  
We have 
\[
K''(d+{\textstyle\frac43-\frac83b,b,d})
= \frac{2^{8-d}\,C(d-\frac23-\frac{2b}{3},d-\frac23-\frac{2b}{3},d)}{\pi^{d}(\frac{8}{3}b-\frac43)_*^2\,\,(\frac23b-\frac13)_*}.
\]
Therefore we get
\[
K''(d+{\textstyle\frac43-\frac83b,b,d})\approx \frac{1}{(2b-1)^3},
\qquad\text{as $\textstyle b\searrow\frac12$}
\]
and that $K''(d+{\textstyle\frac43-\frac83b,b,d})$ remains bounded for $b$ in any interval
of the form $[\frac12+\epsilon,1]$, with $0<\epsilon\le \frac12$.

Let us set $\beta=2b-1$, so that $0<\beta\le 1$.
Then, for $d\ge3$, we can find a constant $\kappa_d''$,   depending only on $d$
such that
\begin{equation}
\|B''(u,v)\|_{\Y_{d-\frac43\beta}}\le \frac{1}{4\kappa''_d}\, \frac{\tau^{-1+\beta}}{\beta^3} 
 \,\|u\|_{\Y_{d-\frac43\beta}}
 \, \|v\|_{\Y_{d-\frac43\beta}},
\end{equation}
for any $0<\beta\le1$.
We can also prove that, still for $0<\beta\le 1$, the operator
\[
B''\colon\Y_{d-\frac43\beta}\times \Y_{d-\frac43\beta}\to \mathcal{X}
\]
is continuous.
Indeed, for this, we just need to choose a different value for $\gamma$, namely
\[
\gamma=1+d-a-2b.
\]
This leads us to put the new condition $0\le 1+d-a-2b\le1$. But this new condition is already ensured by system~\eqref{sys''}.

At this stage, one sees that the existence (and the uniqueness in a ball) of a solution to {\rm (TM')} can be established in the same space as in Theorem~\ref{th:PM} and under the same size condition~\eqref{small-assu}. 
\end{proof}

\section{Finite time blowup for both toy models: an iterative method involving the Fourier transform}

In this section we establish the finite time blowup for solutions to both toy models, assuming that the initial data are large.
For the blowup we will discuss to what extent the size conditions found in the previous section are sharp in the limit $\tau\to\infty$.

The first method leads to finite time blowup for {\rm (TM)}, for a class of initial data
such that $\|u_0\|_{\PM^{d-2}}\gtrsim \tau$, for $\tau$ large enough. 
Therefore, the gap between the global existence and the blowup for such models is only logarithmic in $\tau$.

The second method, a more ``traditional''  eigenfunction or moment one, will be presented in the next section for the system {\rm (TM')} considered in bounded domains with the Dirichlet boundary conditions. 
It seems that other classical approaches to blowup questions in nonlinear parabolic equations (such as the energy method, convexity method, Fujita method), see \cite[Ch. 17]{QS} do not work for those systems because of the structure differences compared to the case of a single equation, and in particular, because of different diffusivities in the both equations.


\subsection{The model {\rm (TM)}}

Let us recall that for, $u_0\in L^{d/2}$, $d\ge3$, there exists a local-in-time solution
to (TM), unique in the class precised in~Remark~\ref{rem:local}. The following theorem
gives a sufficient condition on $u_0$, for such a solution, to blowup in finite time.

Our approach is closely related to that in \cite[Theorem 3.1]{BB1} which was inspired by the blowup result in \cite{M-S} for the so called ``cheap'' Navier--Stokes equations, producing lower bound estimates for the Fourier transform.
The key idea is that the structure of the integral on the right hand side of~\eqref{FTMsol}
 implies that, if $\widehat u_0(\xi)\ge0$, then the positivity of the Fourier transform
will be preserved by the sequence of approximate solutions~$u_k$, and so by the limit $u$ of such sequence as $k\to\infty$ when the latter does exist in an appropriate sense.

Consider $w_0\in L^2(\R^d)$ defined by
\[
\widehat w_0(\xi)=\un_{B_0}(\xi),
\]
where $\un_E$ denotes the indicator function of a measurable set $E$,
and $B_0$ is the ball with center $\frac34(1,0\ldots,0)$ and radius $\frac14$.
Thus, the support of $\widehat w_0$ is contained 
in the annulus 
$
E_0=\{\frac12\le |\cdot|\le 1\}$.

Notice that $w_0$ is a complex valued function, but there exist of course
real valued functions $u_0\in L^2(\R^d)$, and even in $\mathscr{S}(\R^d)$ such that $\widehat u_0\ge \widehat w_0$: for example, one can consider a function proportional to a well-chosen Gaussian.

\begin{theorem}
\label{theoremCC}
Let $d\ge3$, $\tau>0$, $A>0$, and $u_0\in \mathscr{S}(\R^d)$, such that
\[
\widehat u_0(\xi)\ge A\widehat w_0(\xi).
\]
Let $t^*$ be the maximal lifetime of the (unique) solution to (TM).
There exists a constant $\kappa_d>0$ (only dependent on $d$) such that if
\begin{equation}
\label{bucoo}
  A > \kappa_d\,{\rm e}^{1/\tau}\,\tau,
\end{equation}
then  $t^*<1$.
\end{theorem}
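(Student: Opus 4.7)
The plan is to adapt the Fourier-space blowup method of Montgomery--Smith~\cite{M-S} (as applied to (PE) in~\cite{BB1}) to the toy model~(TM). The crucial feature of the integral representation~\eqref{FTMsol} is that its kernel $|\eta|^2\mathrm{e}^{-(t-s)|\xi|^2}\mathrm{e}^{-(s-\sigma)|\eta|^2/\tau}$ is pointwise non-negative in Fourier variables. Since the hypothesis ensures $\widehat u_0\ge A\widehat w_0\ge 0$, the Picard iteration associated with~\eqref{FTMsol} preserves Fourier non-negativity and is monotone increasing; in particular, on any interval where a mild solution exists (in the class of Remark~\ref{rem:local}), each Picard iterate provides a pointwise lower bound for $\widehat u$. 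Blowup will follow by showing that a carefully chosen cascade of such lower bounds diverges before $t=1$.

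Concretely, I would construct a dyadic family of Fourier balls $B_k\subset\R^d$ at scale $|\xi|\asymp 2^k$, with $B_0$ as in the statement and $B_{k+1}\subset B_k+B_k$ arranged so that $|B_k\cap(\xi-B_k)|\asymp 2^{kd}$ for every $\xi\in B_{k+1}$, together with an increasing sequence of times $t_k\nearrow t^{**}<1$. The induction hypothesis at step $k$ is $\widehat u(\xi,t)\ge a_k\un_{B_k}(\xi)$ for $t\ge t_k$. Plugging this into~\eqref{FTMsol} and restricting the integration to $\{\eta,\xi-\eta\in B_k\}\cap\{t-s\le|\xi|^{-2}\}\cap\{s-\sigma\le\tau|\eta|^{-2}\}$, on which both heat exponentials are $\ge\mathrm{e}^{-1}$, the prefactor $|\eta|^2/\tau\asymp 4^k/\tau$ cancels against the $\sigma$-window $\asymp\tau 4^{-k}$; combined with the $s$-window $\asymp 4^{-(k+1)}$ and the $\eta$-measure $\asymp 2^{kd}$, this yields
\[
a_{k+1}\ \ge\ c_d\,2^{k(d-2)}\,a_k^2,\qquad t_{k+1}=t_k+C(\tau 4^{-k}+4^{-(k+1)}).
\]
Normalising $b_k=c_d\,2^{(d-2)(k-1)}a_k$ reduces the recursion to $b_{k+1}\ge b_k^2$, so $b_k\ge b_0^{2^k}$ and $a_k\to\infty$ as soon as $a_0$ exceeds a threshold depending only on~$d$.

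The seed $a_0$ is obtained from the first term of~\eqref{FTMsol}, which gives $\widehat u(\xi,t)\ge A\mathrm{e}^{-t|\xi|^2}\un_{B_0}(\xi)$, together with a careful analysis of the first bilinear interaction at scale $|\eta|\asymp 1$: the decay $\mathrm{e}^{-s|\eta|^2/\tau}$ in the inner integration imposes an effective time-window of width $\sim\tau$ for the initial step, whose small-$\tau$ inefficiency is compensated by a factor $\mathrm{e}^{1/\tau}$ in the required seed amplitude, while the factor $\tau$ arises from the bilinear prefactor at the initial scale; together these produce the threshold $A>\kappa_d\,\mathrm{e}^{1/\tau}\,\tau$. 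The times $t_k$ can be chosen so that $t^{**}=\lim t_k<1$ by a global rescaling of the cascade (the naive sum $\sum_k(t_{k+1}-t_k)\asymp\tau+1$ has to be brought below $1$). Once $a_k\to\infty$ at $t^{**}<1$, the Fourier transform of any mild solution in the class of Remark~\ref{rem:local} would fail to remain locally integrable on $B_k$ for large~$k$, forcing $t^*\le t^{**}<1$. The most delicate point, and the main obstacle, is the precise book-keeping that reconciles the two distinct $\tau$-dependencies---the $\mathrm{e}^{1/\tau}$ from the seed and the $\tau$ from the bilinear prefactor and time-rescaling---into a single clean threshold.
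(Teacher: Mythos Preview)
Your overall strategy---a Montgomery--Smith cascade in Fourier space, exploiting the non-negativity of the kernel in~\eqref{FTMsol}---is exactly what the paper does. The gap is in your handling of the inner time integral and the resulting $\tau$-bookkeeping. You take the $\sigma$-window of width $\tau|\eta|^{-2}\approx\tau\,4^{-k}$ precisely so that the prefactor $|\eta|^2/\tau$ cancels against it; but then the time increments $t_{k+1}-t_k\gtrsim\tau\,4^{-k}$ force $t^{**}\gtrsim\tau$, and for large $\tau$ there is no ``global rescaling of the cascade'' that brings $t^{**}$ below~$1$ while keeping $B_0$ at unit scale (the only rescaling available would move $B_0$ to frequency $\approx\sqrt\tau$, which changes the hypothesis of the theorem). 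Your attribution of the factors in the threshold is correspondingly inconsistent: once the $\sigma$-window has absorbed $\tau^{-1}$, the recursion $a_{k+1}\ge c_d\,2^{k(d-2)}a_k^2$ is $\tau$-free, and neither $\tau$ nor $\mathrm{e}^{1/\tau}$ can emerge from that recursion alone.

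The correct bookkeeping is the opposite: do \emph{not} cancel $\tau^{-1}$ in the $\sigma$-integration. Take $t_k=1-4^{-k}$ (so $t_{k+1}-t_k$ is $\tau$-independent and $t_k\nearrow1$), integrate $\sigma$ over all of $[t_{k-1},s]$, and bound the inner heat factor crudely by $\mathrm{e}^{-(s-\sigma)|\eta|^2/\tau}\ge\mathrm{e}^{-(1-t_{k-1})4^{k-1}/\tau}=\mathrm{e}^{-1/\tau}$. The factor $\tau^{-1}$ then survives into the recursion, roughly $\beta_k\approx 4^{-k}\tau^{-1}\mathrm{e}^{-1/\tau}\beta_{k-1}^2$, and solving it shows $\beta_k\to\infty$ precisely when $A\gtrsim\tau\,\mathrm{e}^{1/\tau}$---this is where both factors in~\eqref{bucoo} actually come from. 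One further device that cleans up the outer heat factor $\mathrm{e}^{-(t-s)|\xi|^2}$ is to carry an explicit time decay in the induction hypothesis, $\widehat u(\xi,t)\ge\beta_k\,\mathrm{e}^{-2^k t}\,\widehat w_k(\xi)$ on $[t_k,1)$, rather than the time-uniform bound $\widehat u(\xi,t)\ge a_k\un_{B_k}(\xi)$ that you propose.
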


Notice that the right-hand side in~\eqref{bucoo} behaves like $\tau$ as $\tau \gg 1$.
Thus, the best possible size condition to be put on the initial data, in order to obtain the global existence for {\rm (TM)}, would be of the form
\[
\|u_0\|\lesssim \tau,
\]
no matter the choice of the norm, and 
irrespectively of  the functional setting where one constructs the solution.

\begin{remark}
\label{rem:moret}
In fact, our argument proves more than this. Indeed, we will actually prove the
following assertion that is stronger than Theorem~\ref{theoremCC}.
Let $\tau>0$, $A>0$, $t^* \ge 1$ and 
\[
\widehat u_0(\xi)\ge A\widehat w_0(\xi).
\]
If $u\in {\mathcal C}((0,T),L^\infty(\R^d))$ is a mild solution to {\rm (TM)} on $(0,T)$ such that $\widehat u\ge0$, and if $A$ is such that
\begin{equation}
\label{buc}
 (3t^*-1+ {\rm e}^{-4t^*})  A > \kappa_d\,{\rm e}^{t^*/\tau}\,{\rm e}^{t^*}\tau,
\end{equation}
then we must have $0<T\le t^*$.
In particular, under condition~\eqref{buc} the $L^\infty(\R^d)$-norm of $u(t)$
must blow up in a finite time.
\end{remark}

\begin{proof}[Proof of Remark~\ref{rem:moret}]
By contradiction, assume $T>t^*$ and let $u\in{\mathcal C}((0,T),L^\infty(\R^d))$ be a solution to {\rm (TM)} with $\widehat u\ge0$.
Let us define, for $k=1,2,\dots$, the dyadic ball
\[
B_k=B_{k-1}+B_{k-1}
\]
and the dyadic annulus
\[
E_k=\left\{\xi\in\R^d\colon 2^{k-1}\le |\xi|\le 2^k\right\}.
\]
Let, for any integer $k\ge1$, $w_k=w_0^{2^k}$, in a such way that $w_k=w_{k-1}^2$.
Then  
\[
\widehat w_k= (2\pi)^{-d}\widehat w_{k-1}*\widehat w_{k-1},
\]
and therefore, 
\[
\hbox{supp\,}\widehat w_k\subset B_k\subset E_k. 
\]

\begin{lemma}
\label{indu}
Under the assumption of Theorem \ref{theoremCC},
for all $k=0,1,2,\ldots$, and $t\in[0,T]$, we have
\begin{equation}
\label{fort}
\widehat u(\xi,t)\ge \beta_k{\rm e}^{-2^k t}\un_{\{t_k\le t<t^*\}}(t)\widehat w_k(\xi),
\end{equation}
where $(\beta_k)$ and $(t_k)$ are two sequences defined below in~\eqref{tk} and \eqref{betak}.
\end{lemma}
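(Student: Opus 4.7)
The proof is by induction on $k$. For the base case $k=0$, we use the integral representation~\eqref{FTMsol}. Since the bilinear term is nonnegative (as $\widehat u\ge 0$ by hypothesis), dropping it gives $\widehat u(\xi,t)\ge A\,{\rm e}^{-t|\xi|^2}\widehat w_0(\xi)$; the containment $\mathrm{supp}\,\widehat w_0\subset\{|\xi|\le 1\}$ then yields ${\rm e}^{-t|\xi|^2}\ge {\rm e}^{-t}={\rm e}^{-2^0 t}$, which is~\eqref{fort} with $\beta_0=A$ and $t_0=0$.

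For the induction step, we assume~\eqref{fort} at level $k-1$ and substitute the inductive bound into both copies of $\widehat u$ appearing in the bilinear integrand of~\eqref{FTMsol}. The convolution identity $\widehat w_{k-1}*\widehat w_{k-1}=(2\pi)^{d}\,\widehat w_k$ automatically produces the Fourier support $B_k\subset E_k=\{2^{k-1}\le|\cdot|\le 2^k\}$ in the resulting lower bound,
\[
\widehat u(\xi,t)\;\ge\;\frac{\beta_{k-1}^2}{\tau}\,\widehat w_k(\xi)\int_{t_{k-1}}^{t}\!\int_{t_{k-1}}^{s}\!|\eta|^2\,{\rm e}^{-(t-s)|\xi|^2}{\rm e}^{-(s-\sigma)|\eta|^2/\tau}{\rm e}^{-2^{k-1}(s+\sigma)}\,d\sigma\,ds.
\]
On the relevant support one has $|\eta|^2\in[4^{k-2},4^{k-1}]$ and $|\xi|^2\le 4^k$, so in particular $|\eta|^2/\tau\ge 4^{k-2}/\tau$ may be pulled out of the integral as a multiplicative factor.

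The crucial observation is that, since $s,\sigma\le t$, the desired factor in~\eqref{fort} comes essentially \emph{for free}: ${\rm e}^{-2^{k-1}(s+\sigma)}\ge {\rm e}^{-2^k t}$. The two heat exponentials ${\rm e}^{-(t-s)|\xi|^2}$ and ${\rm e}^{-(s-\sigma)|\eta|^2/\tau}$ are then lower-bounded by restricting the time integrations appropriately: on $s\in[t-4^{-k},t]$ one has ${\rm e}^{-(t-s)\cdot 4^k}\ge {\rm e}^{-1}$, while the $\sigma$-integral of ${\rm e}^{-(s-\sigma)\cdot 4^{k-1}/\tau}$ over $[t_{k-1},s]$ is estimated by Lemma~\ref{lem:integ} (with $A=4^{k-1}/\tau$), producing an explicit power of $\tau$. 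This analysis dictates the recursive choice $t_k=t_{k-1}+\Delta_k$ with $\Delta_k\sim 4^{-k}$, whose series $\sum_k\Delta_k$ converges to the limiting time that will play the role of $t^*$, together with a recursion of the schematic form $\beta_k\ge C_d\,\tau^{-1}\,4^{k-2}\,\beta_{k-1}^2\,h_k(\tau)$.

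The main obstacle is the precise bookkeeping of the factor $h_k(\tau)$ coming from the $\sigma$-integration against the heat kernel with diffusivity $\tau^{-1}$: its behaviour controls the exact $\tau$-dependence of the blowup threshold. Once~\eqref{fort} is established with a clean closed-form recursion, iterating produces a double-exponential growth $\beta_k\gtrsim\lambda^{2^k}$ with $\lambda$ scaling like a constant multiple of $A/\tau$ (up to the mild factor $3t^{*}-1+{\rm e}^{-4t^{*}}$ in~\eqref{buc}); comparing this against the factor ${\rm e}^{-2^k t^{*}}$ and letting $k\to\infty$ then forces the $L^\infty$ norm of $u$ to diverge, yielding the contradiction announced in Theorem~\ref{theoremCC} and Remark~\ref{rem:moret}.
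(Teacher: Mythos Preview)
Your overall architecture (induction, dropping the nonnegative bilinear part for $k=0$, substituting the inductive bound into both factors for the step) is exactly the paper's, but two concrete points are not right.

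First, the displayed lower bound is formally incoherent: you have already pulled out $\widehat w_k(\xi)$, which requires performing the $\eta$-convolution $\int \widehat w_{k-1}(\xi-\eta)\widehat w_{k-1}(\eta)\,d\eta=(2\pi)^d\widehat w_k(\xi)$, yet the factors $|\eta|^2$ and ${\rm e}^{-(s-\sigma)|\eta|^2/\tau}$ still sit inside the remaining integral. You must bound these $\eta$-dependent factors uniformly on $\mathrm{supp}\,\widehat w_{k-1}\subset E_{k-1}$ \emph{before} collapsing the convolution; only then does $\widehat w_k$ appear.

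Second, and more seriously, Lemma~\ref{lem:integ} is an \emph{upper} bound, so it cannot be invoked to lower-bound the $\sigma$-integral. The paper does not use Lemma~\ref{lem:integ} here at all. Instead it takes the crude uniform bound ${\rm e}^{-(s-\sigma)|\eta|^2/\tau}\ge {\rm e}^{-(t^*-t_{k-1})4^{k-1}/\tau}$ and pulls this constant out, so that $\int_{t_{k-1}}^{s}\cdots\,d\sigma$ becomes simply $(s-t_{k-1})$. The $s$-integral $\int_{t_{k-1}}^{t}(s-t_{k-1}){\rm e}^{-(t-s)4^k}\,ds$ is then computed exactly (one integration by parts), giving $(t-t_{k-1})4^{-k}-(1-{\rm e}^{-(t-t_{k-1})4^k})4^{-2k}$. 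The choice $t_k=t^*(1-4^{-k})$ makes $(t_k-t_{k-1})=3t^*\,4^{-k}$, and the exponential pulled out earlier is bounded below by ${\rm e}^{-\delta}$ once $(t^*-t_{k-1})4^{k-1}/\tau\le\delta$, i.e.\ $t^*\le\delta\tau$. This is the ``precise bookkeeping of $h_k(\tau)$'' that you flag as the main obstacle: it is not resolved by Lemma~\ref{lem:integ} but by this elementary two-step estimate, and it is what produces the clean recursion $\beta_k=(3t^*-1+{\rm e}^{-4\delta\tau})\,2^{-2k-4}\tau^{-1}{\rm e}^{-\delta}\beta_{k-1}^2$ and hence the factor ${\rm e}^{1/\tau}$ in the threshold~\eqref{bucoo}.
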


\begin{proof}
For $k=0$, the conclusion immediately follows from \eqref{eq:bilftm}. Indeed, the first term gives us
$$\widehat u(\xi,t)\ge A{\rm e}^{-t|\xi|^2}\widehat w_0(\xi),$$
provided we choose $\beta_0=A$ and $t_0=0$.

Let $k\ge1$. Assume that the inequality of the lemma holds with $k-1$ instead of $k$. 
 Then, for all $t_k\le t<t^*$, considering only the bilinear part in \eqref{eq:bilftm}, we get
\begin{equation*}
 \begin{split}
 \widehat u(\xi,t)
&\ge (2\pi)^{-d}\int\limits_{t_{k-1}}^{t}\int\limits_{t_{k-1}}^s\int\limits_{\R^d} 
\frac{|\eta|^2}{\tau}
{\rm e}^{-(t-s)|\xi|^2}{\rm e}^{-\frac{1}{\tau}(s-\sigma)|\eta|^2}
\beta_{k-1}^2 {\rm e}^{-2^{k-1}s}{\rm e}^{-2^{k-1}\sigma} 
\\
&\qquad\qquad\qquad\qquad\qquad\times
\widehat w_{k-1}(\xi-\eta)\widehat w_{k-1}(\eta)\dd\eta\dd\sigma\dd s\\
&
\ge (2\pi)^{-d} \int\limits_{t_{k-1}}^{t}\int\limits_{\R^d} 
(s-t_{k-1})
\frac{2^{2k-4}}{\tau}
{\rm e}^{-(t-s)|\xi|^2}{\rm e}^{-\frac{1}{\tau}(t^*-t_{k-1})2^{2k-2}}
\beta_{k-1}^2 {\rm e}^{-2^k s}
\\
&\qquad\qquad\qquad\qquad\qquad\times
\widehat w_{k-1}(\xi-\eta)\widehat w_{k-1}(\eta)\dd\eta\dd s.\\
\end{split}
\end{equation*}
Thus, we can bound $\widehat u(\xi,t)$ from below as follows
\begin{equation*}
\begin{split}
\widehat u(\xi&,t) \ge \int\limits_{t_{k-1}}^{t} 
(s-t_{k-1})
\frac{2^{2k-4}}{\tau}
{\rm e}^{-(t-s)2^{2k}}{\rm e}^{-\frac{1}{\tau}(t^*-t_{k-1})2^{2k-2}}
\beta_{k-1}^2 {\rm e}^{-2^{k}s}\widehat w_{k}(\xi)\dd s\\
&
\ge 
\biggl(\int\limits_{t_{k-1}}^{t} 
(s-t_{k-1})
{\rm e}^{-(t-s)2^{2k}}
\dd s
\biggr)
\frac{2^{2k-4}}{\tau}
{\rm e}^{-\frac{1}{\tau}(t^*-t_{k-1})2^{2k-2}}
\beta_{k-1}^2 {\rm e}^{-2^k t}\widehat w_{k}(\xi).
\end{split}
\end{equation*}
Integrating by parts then we get, always for $t_k\le t<t^*$,
\begin{equation*}
\begin{split}
\widehat u(\xi,t)
&\ge
\biggl(
\frac{t-t_{k-1}}{2^{2k}}-\frac{1-{\rm e}^{-(t-t_{k-1})2^{2k}}}{2^{4k}}
\biggr)
\frac{2^{2k-4}}{\tau}
{\rm e}^{-\frac{1}{\tau}(t^*-t_{k-1})2^{2k-2}}
\beta_{k-1}^2 {\rm e}^{-2^{k}t}\widehat w_{k}(\xi)\\
&\ge
\biggl(
(t_k-t_{k-1})-\frac{1-{\rm e}^{-(t^*-t_{k-1})2^{2k}}}{2^{2k}}
\biggr)
\frac{2^{-4}}{\tau}
{\rm e}^{-\frac{1}{\tau}(t^*-t_{k-1})2^{2k-2}}
\beta_{k-1}^2 {\rm e}^{-2^{k}t}\widehat w_{k}(\xi).
\end{split}
\end{equation*}
We would like to establish the two bounds from below 
\begin{equation}
\label{needs}
(t_k-t_{k-1})\ge 3t^*2^{-2k}\qquad\text{and}\qquad
{\rm e}^{-\frac{1}{\tau}(t^*-t_{k-1})2^{2k-2}}\ge {\rm e}^{-\delta},
\qquad \text{for all $k\ge1$,}
\end{equation}
for a suitable  positive constant $\delta$ to be chosen later.
This would imply that, for $t_k\le t<t^*$,
\begin{equation*}
 \begin{split}
  \widehat u(\xi,t)
   &\ge
    (3t^*-1+ {\rm e}^{-4\delta\tau}) 2^{-2k-4}\tau^{-1} {\rm e}^{-\delta} \beta_{k-1}^2 {\rm e}^{-2^k\,t} \widehat w_k(\xi).\\
 \end{split}
\end{equation*}
This last inequality is interesting only if the right hand side is positive.
This requires 
\[
3t^*-1+{\rm e}^{-4\delta\tau}>0.
\]

To ensure the validity of the first bound of~\eqref{needs} (and the condition $t_0=0$), a natural choice is
\begin{equation}
 \label{tk}
t_{k}=t^*\Bigl(1-\textstyle\frac{1}{4^{k}}\Bigr),
\end{equation}
so that $t_k\nearrow t^*$.
The second bound of \eqref{needs} can be rewritten as $(t^*-t_k)2^{2k}\le \delta\tau$; 

\noindent 
we can ensure its validity for all $k\in\N$ as soon as $\delta$ is such that
\[
t^*\le \delta\tau.
\]
The above conditions $3t^*-1+{\rm e}^{-4\delta\tau}>0$ and $\delta\tau\ge t^*$
motivate us to restrict ourselves to $t^*\ge 1$.
 
We choose $(\beta_k)$ in  such a way that
\begin{equation*}
\beta_0=A, \qquad \beta_{k}=(3t^*-1+ {\rm e}^{-4\delta\tau})
 2^{-2k-4}\tau^{-1} {\rm e}^{-\delta} \beta_{k-1}^2, \qquad k=1,2,\ldots\ .
\end{equation*}
This choice leads to inequality~\eqref{fort}.
The assertion of the lemma follows by induction.
\end{proof}

In order to compute $\beta_k$, we introduce
$M$, such that
$$
 2^{M}=(3t^*-1+ {\rm e}^{-4\delta\tau}) {\rm e}^{-\delta}\,2^{-4}\tau^{-1}.
$$ 
Notice that we have $\beta_k=2^{M-2k}\beta_{k-1}^2$ for $k\ge1$.
Recalling that $\beta_0=A$, we deduce that 
\begin{equation}
\label{betak}
 \beta_k = \Bigl(A\,2^{M-4}\Bigr)^{2^k}\,2^{4-M+2k}, \qquad k=0,1,\ldots\ .
\end{equation}

The Fourier inversion formula and the positivity of $\widehat u$ 
imply that 
\[
\|u(t)\|_\infty\ge  |u(0,t)|=(2\pi)^{-d}\|\widehat u(t)\|_1.
\]
Moreover, by Lemma~\ref{indu}, for all $k\in\N$,
\[
\|\widehat u(t_k)\|_1
\ge \beta_k{\rm e}^{-2^kt_k}
\|\widehat w_k\|_1
\ge \beta_k {\rm e}^{-2^kt^*}\|\widehat w_k\|_1.
\]
By the fact that $\widehat w_k\ge0$ for all $k\ge0$, Fubini's theorem and
the formula for the volume of the unit ball $\omega_d=\frac{\pi^{d/2}}{\Gamma(1+d/2)}$, we have
\[
\begin{split}
\|\widehat w_k\|_1
&=(2\pi)^{-d}\|\widehat w_{k-1}\|_1^2=
\ldots=
((2\pi)^{-d})^{2^k-1}\|\widehat w_0\|_1^{2^k}\\
&=(2\pi)^d(K_d)^{2^k},
\end{split}
\]
with
\[
K_d=\frac{1}{8^d\pi^{d/2}\,\Gamma(1+d/2)}.
\]
We conclude that when 
\[
A2^{M-4}{\rm e}^{-t^*}K_d>1,
\]
then we have 
$\|u(t_k)\|_{L^\infty}\to\infty$ for $k\to\infty$.
But $t_k\to t^*$ and so
\[
\limsup_{t\to t^*} \|u(t)\|_\infty=\infty.
\]
This is in contradiction with the assumption that
$u\in {\mathcal C}((0,T),L^\infty(\R^d))$ with $T>t^*$.

The size condition on~$A$ above can be rewritten in an equivalent form as
 $$(3t^*-1+ {\rm e}^{-4\delta\tau})  A > \kappa_d{\rm e}^\delta\,\tau\,{\rm e}^{t^*}$$
where $\kappa_d>0$ is an explicit constant depending only on $d$. 
But $\delta \le t^*/\tau$ and so a sufficient blowup condition is 
\[
 (3t^*-1+ {\rm e}^{-4t^*})  A > \kappa_d {\rm e}^{t^*(1+1/\tau)}\,\tau. 
\] 
\end{proof}  

\subsection{The model {\rm (TM')}}

Similarly to the subsection above, we want to prove explosion in finite time for large initial data.
 This time, we obtain a lower bound of the type $\tau^2$ so there is a much larger discrepancy with respect to the upper estimates of the form $\tau(\log \tau)^{-3}$ sufficient for the global existence in Sec. 3. 
Similar lower bounds appear in  Proposition 5.1 for the problem in bounded domains. 

To simplify notation, we assume from the beginning that $\tau \ge 1/2$, and we look for initial data which would ensure explosion at time $t^*=1$. As previously, our goal is to prove by induction that for some $A(\tau)>0$, there is a sequence $\beta_k \rightarrow \infty$ such that for $t_k=1-4^{-k}$, 
provided 
$$
\widehat u_0(\xi)\ge A\widehat w_0(\xi),
$$
we have
\begin{equation}
\label{induc}
\widehat u(\xi,t)\ge \beta_k{\rm e}^{-2^k t}\un_{\{t_k\le t<t^*\}}(t)\widehat w_k(\xi),
\end{equation}
with the $w_k$ defined as above, which implies as previously $\Vert u(t_k) \Vert_{\infty} \rightarrow \infty$.
\\
We begin by using the Fourier representation \eqref{bilfTM'}.
\begin{align*}
u(t)={\rm e}^{t\Delta}u_0+B''(u,u)(t)
\end{align*}
with
\begin{align*}
 \widehat{B''(u,u)}(\xi,t)
= \frac{\tau^{-2}}{(2\pi)^d}
  \int_0^t\!\!\!\int_0^s\!\!\!\int_0^s\!\!\!\int_{\R^d} &
  {\rm e}^{-(t-s)|\xi|^2} {\rm e}^{-(s-\sigma)|\xi-\eta|^2\tau^{-1}}{\rm e}^{-(s-\lambda)|\eta|^2\tau^{-1}}
\\
  &\times |\xi-\eta|^2|\eta|^2\widehat u(\xi-\eta,\sigma)\widehat u(\eta,\lambda)
  \dd s\dd\sigma\dd\lambda\dd\eta.
\end{align*}
Again, we proceed by induction and assuming that~\eqref{induc} holds with $k-1$ instead of~$k$, we deduce that for $t_k \le t <t^*$,
\begin{align*}
\widehat u(\xi,t)
=& (2 \pi)^{-d} \tau^{-2} \int_0^t\!\!\!\int_0^s\!\!\!\int_0^s\!\!\!\int_{\R^d}
  {\rm e}^{-(t-s)|\xi|^2} {\rm e}^{-(s-\sigma)|\xi-\eta|^2\tau^{-1}}{\rm e}^{-(s-\lambda)|\eta|^2\tau^{-1}}
\\
  &\qquad\times |\xi-\eta|^2|\eta|^2\widehat u(\xi-\eta,\sigma)\widehat u(\eta,\lambda)
  \dd s\dd\sigma\dd\lambda\dd\eta 
\\
\ge & (2 \pi)^{-d} \tau^{-2} \int_{t_{k-1}}^t\!\!\!\int_{t_{k-1}}^s\!\!\!\int_{t_{k-1}}^s\!\!\!\int_{\R^d}
  {\rm e}^{-(t-s)|\xi|^2}  {\rm e}^{-(s-\sigma)|\xi-\eta|^2\tau^{-1}} |\xi-\eta|^2\widehat u(\xi-\eta,\sigma)
\\
  &\qquad\times
	{\rm e}^{-(s-\lambda)|\eta|^2\tau^{-1}} |\eta|^2 \widehat u(\eta,\lambda)
  \dd s\dd\sigma\dd\lambda\dd\eta
\\
\ge& (2 \pi)^{-d} \tau^{-2} \int_{t_{k-1}}^t\!\!\!\int_{t_{k-1}}^s\!\!\!\int_{t_{k-1}}^s\!\!\!\int_{\R^d}
  {\rm e}^{-(t-s)|\xi|^2}   {\rm e}^{-2^{2k-2} (s-\sigma)/\tau} 2^{2k-4} \beta_{k-1} \widehat w_{k-1}(\xi-\eta) e^{-2^{k-1} t}
\\	
&	\qquad\times {\rm e}^{-2^{2k-2} (s-\lambda)/\tau} 2^{2k-4} \beta_{k-1} \widehat w_{k-1}(\eta)
	e^{-2^{k-1} t} \dd\eta\dd\sigma\dd\lambda\dd s
\\
\ge&	2^{4k-8} \tau^{-2} {\rm e}^{-2^{2k-1} (1-t_{k-1})/\tau} \int_{t_{k-1}}^t\ (s-t_{k-1})^2 {\rm e}^{(s-t)2^{2k}} ds
\times \beta_{k-1}^2 e^{-2^k t} \widehat w_k(\xi)
\\
\ge& 2^{4k-8} \tau^{-2} {\rm e}^{-2^{2k} (1+1/2\tau)(1-t_{k-1})} \int_{t_{k-1}}^t\ (s-t_{k-1})^2 ds
\times \beta_{k-1}^2 e^{-2^k t} \widehat w_k(\xi)
\\
\ge& \frac{2^{4k-8}}{3}	\times \tau^{-2} {\rm e}^{-2^{2k} (1+1/2\tau)(1-t_{k-1})} (t-t_{k-1})^3 
\beta_{k-1}^2 e^{-2^k t} \widehat w_k(\xi)
\\
\ge& \frac{2^{4k-8}}{3}	\times \tau^{-2} {\rm e}^{-2^{2k+1} (1-t_{k-1})} (t_k-t_{k-1})^3 
\beta_{k-1}^2 e^{-2^k t} \widehat w_k(\xi). 
\end{align*}
In the last inequality, we used that $\tau \ge 2$. Now, with the same $t_k$ as in \eqref{tk} (we recall that here $t^*=1$), we get  for some $K>0$ and for $t_k \le t <t^*$:
$$
\widehat u(\xi,t) \ge (K \tau^2)^{-1} 2^{-2k} {\rm e}^{-2^{k}t} \beta_{k-1}^2 \widehat w_k(\xi).
$$
So if we define
$$
\beta_0:=A;\ \beta_k:=(K \tau^2)^{-1} 2^{-2k} \beta_{k-1}^2,
$$
we have proved that
$$
\widehat u(\xi,t)\ge \beta_k{\rm e}^{-2^k t}\un_{\{t_k\le t<t^*\}}(t)\widehat w_k(\xi).
$$
As previously, we obtain that
$$
\beta_k=\Big( A/ 16 K\tau^2 \Big)^{2^k} 16 K \tau^{2} 2^{2k},
$$
which implies as above that we have blow up provided
$$
A \ge 16 C_d K \tau^2,
$$
with $C_d>0$ only depending on the dimension.

\section{Blowup for the toy model {\rm (TM')} in bounded domains}


The toy model {\rm (TM')} features cross-diffusion terms which makes its existence and regularity of solutions delicate as we have already seen. Compared to the standard example of the square nonlinear heat equation {\rm (NLH)},  
the nonlinearity in {\rm (TM')}  is weaker that that in {\rm (NLH)}. 
Indeed, one may represent 
\begin{equation}\label{Lapv}
\Delta \varphi(t)=\frac1\tau \Delta{\rm e}^{\frac{t}{\tau}\Delta}\varphi_0 +\frac1\tau\int_0^t \Delta{\rm e}^{\frac{t-s}{\tau}\Delta}u(s)\dd s.
\end{equation}  
Now it is clear that $-\Delta \varphi(t)$ is somewhat smoother and smaller than $\frac1\tau u(t)$ itself since \eqref{Lapv} involves  two smoothing (or rather ``averaging'') operators with respect to the both variables $t$ and $x$, namely $\int_0^t \dots \dd s$ and $\Delta{\rm e}^{t\Delta}$. Thus, one expects  sufficient conditions for blowup of {\rm (TM')} to be stronger with $\tau\gg 0$ than those for $\tau=0$, i.e. for equation {\rm (NLH)}. 
Note that blowup results for parabolic systems, in particular for various versions of doubly parabolic systems of chemotaxis, are scarce since  there are  no   general comparison principles available for parabolic systems, even for linear ones. For some examples for the Keller--Segel system see, e.g., \cite{Wi1,Wi2,Wi3}, where supplementary information on solutions is extracted from entropy functionals and other specific properties of those drift-diffusion systems. 

Here, for simplicity of presentation, let us consider {\rm (TM')} in a smooth bounded domain $\Omega\subset {\mathbb R}^d$, supplemented with the homogeneous Dirichlet conditions for both $u$ and $\varphi$
\begin{equation}\label{D}
u(x,t)=\varphi(x,t)=0\ \ \ \ {\rm for\ \ each\ \ \ } x\in\partial\Omega,\ \ \ t\ge 0.
\end{equation}

\begin{proposition}\label{blowTM2}
There are no global-in-time classical solutions of system (TM') with $\tau\ge 2$ and suitably large $u_0\ge 0$, $\varphi_0\ge 0$ (of order $\tau^2$, $\tau$, respectively). 
More precisely, if 
\begin{equation}\label{contr2}
\int_\Omega \psi(x)u_0(x)\dx\ge \frac32\lambda \tau^2,
  \int_\Omega \psi(x)\varphi_0(x)\dx\ge  \frac32\tau,
\end{equation}
where $\psi\ge 0$ is the normalized eigenfunction of $\Delta$ with the first  eigenvalue $\lambda$, then classical solutions of system {\rm (TM')}  cannot be defined globally in time.

\end{proposition}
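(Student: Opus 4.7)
The strategy is to apply Kaplan's moment method adapted to the coupled pair. Let $\psi\geq 0$ be the first Dirichlet eigenfunction of $-\Delta$ on $\Omega$, with eigenvalue $\lambda>0$, normalized so that $\int_\Omega\psi\dx=1$, and set $U(t)=\int_\Omega\psi\,u(t)\dx$ and $V(t)=\int_\Omega\psi\,\varphi(t)\dx$. Differentiating along a classical solution of (TM'), integrating by parts twice (using $\psi=u=\varphi=0$ on $\partial\Omega$, so that $\int\psi\,\Delta f=-\lambda\int\psi f$ for $f=u,\varphi$), and applying the Cauchy--Schwarz inequality $(\int\psi\,\Delta\varphi)^2\leq\int\psi(\Delta\varphi)^2$, which holds because $\int\psi=1$, one obtains the coupled system
\[
U'(t)\geq-\lambda U(t)+\lambda^2V(t)^2,\qquad\tau V'(t)=-\lambda V(t)+U(t),
\]
the second being an exact identity.

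Next I would show by a barrier argument that under the hypotheses~\eqref{contr2} the box $\{U\geq\frac{3}{2}\lambda\tau^2,\,V\geq\frac{3}{2}\tau\}$ is invariant along the classical solution on its maximal lifespan $[0,T^*)$. If $T_0<T^*$ were the first time one of the inequalities becomes an equality, then either $V(T_0)=\frac{3}{2}\tau$, forcing $V'(T_0)\leq 0$ and hence $U(T_0)\leq\lambda V(T_0)=\frac{3}{2}\lambda\tau$, which contradicts $U(T_0)\geq\frac{3}{2}\lambda\tau^2\geq 3\lambda\tau$ (using $\tau\geq 2$); or $U(T_0)=\frac{3}{2}\lambda\tau^2$, forcing $U'(T_0)\leq 0$ and hence $U(T_0)\geq\lambda V(T_0)^2\geq\frac{9}{4}\lambda\tau^2$, again impossible. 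A similar ``bouncing'' argument (exploiting that $\tau V''\geq\lambda^2 V(V-1)>0$ whenever $V'=0$) shows $V'(t)\geq 0$ throughout $[0,T^*)$.

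The final step is to deduce $T^*<\infty$. Eliminating $U$ via $U=\tau V'+\lambda V$ turns the inequality for $U'$ into the second-order ODE
\[
\tau V''+\lambda(1+\tau)V'\geq\lambda^2 V(V-1)\geq\tfrac{1}{2}\lambda^2 V^2,
\]
where the last bound uses $V\geq 3$. Multiplying by $V'\geq 0$ and integrating yields an energy-type inequality of the form $(V')^2\gtrsim\frac{\lambda^2}{\tau}V^3$, modulo a correction from the friction $\lambda(1+\tau)V'$. Comparison with a Riccati-type ODE $v'=c\,v^{3/2}$ then forces $V(t)\to+\infty$ at a finite time $T^*$, which excludes the existence of a global classical solution.

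The main obstacle is the last step, where the damping $\lambda(1+\tau)V'$ on the left-hand side is of the same order as the forcing $\frac{1}{2}\lambda^2 V^2$ on the right. The size conditions $U_0\gtrsim\lambda\tau^2$ and $V_0\gtrsim\tau$ in~\eqref{contr2} are precisely what is needed to produce enough initial ``kinetic energy'' to dominate the friction and escape the basin of attraction of the saddle fixed point $(\lambda,1)$ of the underlying limiting ODE. This heuristically matches the $\tau^2$-threshold already obtained for (TM') in Sec.~4.2, and emphasizes the gap with the global-existence threshold $\tau/(\ln\tau)^3$ of Sec.~3.
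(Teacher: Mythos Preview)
Your derivation of the moment system via Kaplan's method is correct and coincides with the paper's: writing $I,J$ for your $U,V$, the paper obtains the same second-order inequality $\tau V''+\lambda(1+\tau)V'\ge\lambda^2V(V-1)$. Your box-invariance argument is sound (and a pleasant addition), but there are two gaps after that.

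The minor one concerns $V'\ge0$. Your ``bouncing'' observation that $\tau V''\ge\lambda^2V(V-1)>0$ whenever $V'=0$ only prevents $V'$ from crossing zero \emph{from above}; it does not exclude $V'(0)<0$, which is allowed by the hypotheses (take $V(0)\gg\tau^{2}$ with $U(0)=\frac32\lambda\tau^{2}$). This is reparable: on any interval where $V'<0$ one still has $\tau V''\ge\lambda^2V(V-1)\ge c_0>0$ by the box bound, so $V'$ reaches zero in finite time and then stays nonnegative; one restarts from there.

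The real gap is the one you yourself call ``the main obstacle''. After multiplying $\tau V''+\lambda(1+\tau)V'\ge\frac12\lambda^2V^2$ by $V'$, the friction contributes $+\lambda(1+\tau)(V')^2$ on the left, with the wrong sign for an energy estimate, and your appeal to ``enough initial kinetic energy'' is not a proof. The paper's device is to remove the damping \emph{exactly} by the substitution $X(t)={\rm e}^{\alpha t}V(t)$ with $\alpha=\frac{\lambda}{2}(1+\tfrac1\tau)$, which transforms the inequality into
\[
\ddot X-\tfrac{\lambda^2}{4}\bigl(1-\tfrac1\tau\bigr)^{2}X\ \ge\ \tfrac{\lambda^2}{\tau}\,{\rm e}^{-\alpha t}X^{2}.
\]
Crucially, $\dot X={\rm e}^{\alpha t}\bigl(\tfrac{\lambda}{2}(1-\tfrac1\tau)V+\tfrac1\tau U\bigr)\ge0$ follows immediately from the nonnegativity of $U$ and $V$ (preserved by the maximum principle for (TM')), so no delicate $V'\ge0$ step is needed. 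Multiplying by $\dot X\ge0$ and integrating gives $(\dot X)^2\ge\frac{2\lambda^2}{3\tau}{\rm e}^{-\alpha t}X^{3}$, provided the initial-energy condition $(\dot X(0))^2\ge\frac{2\lambda^2}{3\tau}X(0)^3$ holds; this and $X(0)>\frac32\tau$ are exactly where the size assumptions~\eqref{contr2} enter. Separating variables then forces $X(t)^{-1/2}$ to reach zero in finite time. So the missing idea in your argument is precisely this exponential conjugation that kills the first-order term.
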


The proof of that result involves a generalization of the classical construction  for nonlinear heat equations, see e.g. \cite[Th. 17.1]{QS}, namely the moment method invented about 50 years ago by S. Kaplan. This can be applied to sufficiently regular weak solutions. Indeed, a smooth function vanishing at the boundary  ($\psi$ below) can be used as a test function, and time-continuity in the weak sense is needed to perform the computations below. 

\begin{proof}
Let $\psi$ be the first eigenfunction of the Dirichlet Laplacian on the domain $\Omega$ 
$$\Delta \psi+\lambda \psi=0,\ \ \ \psi(x)=0\ \ \ \ {\rm for\ \ each\ \ \ } x\in\partial\Omega,$$ 
which may be chosen so that $\psi(x)>0$ for each 
$x\in\Omega$ and $\int_\Omega \psi(x)\dd x=1$, here $\lambda>0$ is the first eigenvalue of the Dirichlet Laplacian.  
Define the moments
\begin{equation}\label{J}
J(t)=\int_\Omega \psi(x)\varphi(x,t)\dd x,\ \ \ I(t) =\int_\Omega \psi(x)u(x,t)\dd x.
\end{equation} 
Solutions of the system {\rm (TM')} satisfy  a single, second order in time equation for $\varphi$  
\begin{equation}\label{eq-phi}
\tau \varphi_{tt}=(\tau+1)\Delta\varphi_t-\Delta^2\varphi+(\Delta\varphi)^2.
\end{equation}

 Suppose that we have a global-in-time solution $\varphi$ of \eqref{eq-phi}. 
It is easy to see that after some integrations by parts we obtain
\begin{equation}
\begin{aligned}\label{diffineq}
\tau\ddot J(t)&=-\lambda(\tau +1)\dot J(t) -\lambda^2 J(t)+\int_\Omega \psi(\Delta\varphi)^2 \\
&\ge -\lambda(\tau +1)\dot J(t) -\lambda^2 J(t)+ {\lambda^2} J(t)^2 .
\end{aligned}
\end{equation} 
Indeed, one has by the Jensen inequality 
$$\int_\Omega \psi(\Delta\varphi)^2\ge\left(\int_\Omega \psi|\Delta\varphi|\right)^2\ge \left(\int_\Omega \psi \Delta\varphi \right)^2=\left(\int_\Omega \Delta\psi\, \varphi\right)^2 =\lambda^2J^2.$$ 
Now, the differential inequality \eqref{diffineq} reads 
\begin{equation}\label{diffineq2}
\ddot J(t)+\lambda\left(1+\frac1\tau\right)\dot J(t)+\frac{\lambda^2}{\tau}J(t)\ge \frac{\lambda^2}{\tau}J(t)^2.
\end{equation} 
Taking $X(t)={\rm e}^{\alpha t}J(t)$, i.e. ``zooming in''   $J(t)$, with $\alpha=\frac\lambda{2}\left(1+\frac1\tau\right)$, we arrive at 
\begin{equation}\label{X}
\ddot X(t)-\frac{\lambda^2}{4}\left(1 - \frac1\tau\right)^2X(t)\ge \frac{\lambda^2}{\tau}{\rm e}^{-\alpha t}X(t)^2. 
\end{equation} 
Clearly, $X(t)\ge 0$ but also $\dot X(t)\ge0$ since 
$\sgn \dot X=\sgn \left(\dot J +\alpha J  \right).$ 
Indeed, we have $\dot J =-\frac{\lambda}{\tau}J +\frac1\tau I $, so that 
\begin{equation}\label{XJI}
\dot J+\alpha J=\frac\lambda{2}\left(1-\frac1\tau\right)J+\frac1\tau I\ge 0
\end{equation}
 for all $\tau\ge 1$. 
In particular, $X(t)>0$ holds if $0\le \varphi\not\equiv 0$. 

Therefore we obtain $2\ddot X(t)\dot X \ge 2\frac{\lambda^2}{\tau}{\rm e}^{-\alpha t}X^2\dot X \ge \frac23\frac{\lambda^2}{\tau} d({\rm e}^{-\alpha t}X(t)^3)/dt$, so we get that  
$$\left(\dot X(t)\right)^2-\left(\dot X(0)\right)^2\ge \frac23\frac{\lambda^2}{\tau}\left({\rm e}^{-\alpha t}X(t)^3-X(0)^3\right).$$ 
Suppose that we can guarantee  that 
\begin{equation}\label{contr4}\left(\dot X(0)\right)^2- \frac23\frac{\lambda^2}{\tau}X(0)^3\ge 0.
\end{equation}  
Under this condition we continue with  
$$\frac{\dot X(t)}{X(t)^{3/2}}\ge  \left(\frac23\frac{\lambda^2}{\tau}\right)^{1/2}{\rm e}^{-\frac{\alpha}{2}t},$$ 
and after an integration 
   $$2\left(\frac{1}{X(0)^{1/2}}-\frac{1}{X(t)^{1/2}}\right)   \ge \left(\frac{2}{3\tau}\right)^{1/2}\lambda\frac{2}{\alpha}\left(1-{\rm e}^{-\frac{\alpha}{2}t}\right).$$
Rearranging the terms we get 
\begin{equation}\label{contr}
\frac{1}{X(t)^{1/2}}\le \frac{1}{X(0)^{1/2}}+ \left(\frac{2}{3\tau}\right)^{1/2}\frac{2}{1+\frac1\tau}\left({\rm e}^{-\frac{\alpha}{2}t}-1\right),
\end{equation}
as long as the solution exists.  
Now, if $\tau\ge 1$ and 
\begin{equation}\label{contr3}
X(0)=J(0)>\frac32\tau, 
\end{equation}
 then we arrive at a contradiction for large times.

From  \eqref{XJI} and the inequality between the arithmetic and geometric means we obtain 
$$\left(\dot X(0)\right)^2\ge 4\frac\lambda{2}\left(1-\frac1\tau\right)J(0)\frac1\tau I(0).$$ 
Thus, to get \eqref{contr4}, it suffices for $\tau\ge 2$ to have $I(0)\ge \frac23\lambda J(0)^2$ since $J(0)=X(0)$. 
Now, conditions \eqref{contr4} and \eqref{contr3}, sufficient for a finite time blowup, for $\tau\ge 2$ follow from
$$J(0)\ge \frac32\tau \ \ \ {\rm and}\ \ \ I(0)\ge \frac23 \lambda J(0)^2.$$
Thus, the condition \eqref{contr2} in Proposition \ref{blowTM2} can be satisfied for suitably chosen $J(0)$ and $I(0)$  of order $\tau$ and $\tau^2$, respectively. 

Summarizing, with sufficiently large $\varphi_0$ ($\varphi_0\gtrsim\tau$) and suitably  large $u_0$ ($u_0\gtrsim \tau^2$) blowup of solutions occurs at a finite time. 
\end{proof}


\medskip

\begin{remark}\label{whole-space}
An immediate generalization of the proof of Proposition \ref{blowTM2} to the case of the whole space $\mathbb R^d$ seems not possible since the relation $-\Delta\psi\ge \lambda\psi$ for some $\lambda>0$ cannot be valid for a~function $0\neq \psi\ge 0$ where, for instance,  $\psi$ has either compact support or decays fast enough as $x\to\infty$, and is sufficiently smooth to integrate $\Delta^2$ by parts four times. The above inequality fails near the boundary of the support of $\psi$. 
\end{remark}


\section*{Acknowledgments}

The authors thank 
Lucilla Corrias, Pierre-Gilles Lemari\'e, Miko{\l}aj Sier\.z\c{e}ga, Philippe Souplet  for interesting conversations on the topic of that work. 
\medskip \\
The first named author would like to thank Institut Camille Jordan, Universit\'e Claude Bernard-Lyon~1 for hospitality during his sabbatical stay (Sep 2021--Jan 2022) as a fellow of {\em Institut des \'Etudes Avanc\'ees -- Collegium de Lyon}, partially supported by  the Polish NCN grant \hbox{2016/23/B/ST1/00434}. 



\end{document}